\theoremstyle{plain}
\newtheorem{thm}{\protect\theoremname}
  \theoremstyle{definition}
  \newtheorem{defn}[thm]{\protect\definitionname}
  \theoremstyle{plain}
  \newtheorem{conjecture}[thm]{\protect\conjecturename}
  \theoremstyle{plain}
  \newtheorem{prop}[thm]{\protect\propositionname}
  \theoremstyle{plain}
  \newtheorem{cor}[thm]{\protect\corollaryname}
  \theoremstyle{remark}
  \newtheorem*{acknowledgement*}{\protect\acknowledgementname}
  \theoremstyle{remark}
  \newtheorem*{notation*}{\protect\notationname}
  \theoremstyle{plain}
  \newtheorem{lem}[thm]{\protect\lemmaname}
\author{Michel van Garrel}
\address{KIAS, 85 Hoegiro, Dongdaemun-gu, Seoul 130-722, Republic of Korea}
\curraddr{}
\email{vangarrel@kias.re.kr}
\thanks{}
\author{Tony W. H. Wong}
\address{Department of Mathematics, Kutztown University of Pennsylvania, 15200 Kutztown Road, Kutztown, PA 19530, USA}
\curraddr{}
\email{wong@kutztown.edu}
\thanks{}
\author{Gjergji Zaimi}
\address{Department of Mathematics, California Institute of Technology, MC 253-37, 1200 East California Boulevard, Pasadena, CA 91125, USA}
\curraddr{}
\email{gzaimi@caltech.edu}
\thanks{}
\let\myTOC\tableofcontents
\renewcommand\tableofcontents{%
 \pdfbookmark[1]{\contentsname}{}
  \myTOC
}
 \def\LyX{\texorpdfstring{%
  L\kern-.1667em\lower.25em\hbox{Y}\kern-.125emX\@}
  {LyX}}
\newcommand{\Q}{\mathbb Q}
\newcommand{\Z}{\mathbb Z}
\newcommand{\N}{\mathbb N}
\newcommand{\IP}{\mathbb P}
\newcommand{\ptwo}{\IP^2}
\newcommand{\IC}{\mathbb C}
\newcommand{\mc}{\mathcal}
\DeclareMathOperator{\hhh}{H}
\DeclareMathOperator{\euler}{e}
\DeclareMathOperator{\tott}{Tot}
\DeclareMathOperator{\dt}{DT}
\DeclareMathOperator{\hilb}{Hilb}
\newcommand{\xyR}[1]{
  \xydef@\xymatrixrowsep@{#1}}
\newcommand{\xyC}[1]{
  \xydef@\xymatrixcolsep@{#1}}
  \providecommand{\acknowledgementname}{Acknowledgement}
  \providecommand{\conjecturename}{Conjecture}
  \providecommand{\corollaryname}{Corollary}
  \providecommand{\definitionname}{Definition}
  \providecommand{\lemmaname}{Lemma}
  \providecommand{\notationname}{Notation}
  \providecommand{\propositionname}{Proposition}
\providecommand{\theoremname}{Theorem}
\def\blfootnote{\xdef\@thefnmark{}\@footnotetext}
\begin{document}

\title{Integrality of relative BPS state counts of toric Del Pezzo surfaces}

\begin{abstract}
Relative BPS state counts for log Calabi-Yau surface pairs were introduced
by Gross-Pandharipande-Siebert in \cite{Gross-Panda-Siebert} and
conjectured by the authors to be integers. For toric Del Pezzo surfaces,
we provide an arithmetic proof of this conjecture, by relating these
invariants to the local BPS state counts of the surfaces. The latter
were shown to be integers by Peng in \cite{GV_inv_integral_local_toric_CY};
and more generally for toric Calabi-Yau threefolds by Konishi in \cite{Konishi_int_GV_inv}.
\end{abstract}

\maketitle

\blfootnote{M. van Garrel was supported by a Fields Postdoctoral Fellowship for the Fields major thematic program on Calabi-Yau Varieties: Arithmetic, Geometry and Physics from July to December 2013.}

\pagestyle{myheadings}
\markright{\hfill Integrality of relative BPS state counts of toric Del Pezzo surfaces \hfill}

\section{Introduction}

\subsection{Local BPS state counts}\label{intro-loc-BPS}

For Calabi-Yau threefolds, \emph{BPS invariants} were defined by Gopakumar-Vafa
in \cite{GV_M_theory_I,GV_M_theory_II} using a $M$-theory construction.
Their definition and the related conjectures were extended to all
threefolds by Pandharipande in \cite{panda_Hodge_int_deg_contr,panda_3_qu_GW}.
Presently, genus $0$ invariants are considered, also called \emph{BPS
state counts}. For the local Calabi-Yau geometries relevant below,
the terminology \emph{local} BPS state counts is used. Let $S$ be
a smooth Del Pezzo surface and denote by $E$ a smooth effective anticanonical
divisor on it, that is, an elliptic curve. Denote furthermore by $K_{S}$
the non-compact local Calabi-Yau threefold given as the total space
of the canonical bundle $\mc O_{S}(-E)$ on $S$. For a curve class
$\beta\in\hhh_{2}(S,\Z)$, denote by $n_{\beta}$ the local BPS state
count in class $\beta$, whose definition we state below. From
a physics point of view, $n_{\beta}$ counts $D$-branes supported
on genus 0 curves of class $\beta$. This definition does not rest
on rigorous mathematical foundations, so alternative definitions are
used. Perhaps the most common one, the one we follow in this paper,
aims at extracting multiple cover contributions from Gromov-Witten
invariants and is as follows. Denote by $\overline{M}_{0,0}(S,\beta)$,
resp. by $\overline{M}_{0,1}(S,\beta)$, the moduli stack of stable
maps $f:C\to S$ from genus $0$ curves with no, resp. one, marked
point to $S$ such that $f_{*}([C])=\beta$. Denote moreover by $\pi:\overline{M}_{0,1}(S,\beta)\to\overline{M}_{0,0}(S,\beta)$
the forgetful morphism and by $ev:\overline{M}_{0,1}(S,\beta)\to S$
the evaluation map. This determines the obstruction bundle $R^{1}\pi_{*}ev^{*}K_{S}$
whose fiber over a stable map $f:C\to S$ is $\hhh^{1}(C,f^{*}K_{S})$.
Then \emph{$I_{K_{S}}(\beta)$,} \emph{the genus $0$ local Gromov-Witten
invariant of degree $\beta$ }of\emph{ $S$,} is defined as the integral
of the Euler class of $R^{1}\pi_{*}ev^{*}K_{S}$ against the virtual
fundamental class of $\overline{M}_{0,0}(S,\beta)$, i.e.
\[
I_{K_{S}}(\beta):=\int_{[\overline{M}_{0,0}(S,\beta)]^{vir}}\euler\left(R^{1}\pi_{*}ev^{*}K_{S}\right)\in\mathbb{Q}.
\]
The definition of the associated BPS state counts is modeled on the
following ideal (rarely satisfied) situation: Suppose that $K_{S}$
only contained a finite number of genus 0 degree $\beta$ curves, and
that all these curves were rigidly embedded in $K_{S}$, i.e.\ with
normal bundle isomorphic to $\mc O(-1)\oplus\mc O(-1)$. 
Then $n_{\beta}$ should be the number of such curves. Let $\tilde{C}\subset K_{S}$
be such a rigid rational curve of degree $\beta$. According to the
Aspinwall-Morrison formula proven by Manin in \cite{manin_am_et_al},
degree $k$ stable maps 
\[
C\to\tott(\mc O_{\tilde{C}}(-1)\oplus\mc O_{\tilde{C}}(-1))
\]
contribute a factor of $\frac{1}{k^{3}}$ to the Gromov-Witten invariant
$I_{K_{S}}(k\beta)$. For $k\in\N$, we write $k|\beta$ to mean that
there is $\beta'\in\hhh_{2}(K_{S},\Z)$ such that $k\beta'=\beta$.
In this ideal situation then, the following equality would hold:
\begin{equation}
I_{K_{S}}(\beta)=\sum_{k|\beta}\frac{1}{k^{3}}n_{\beta/k}.\label{eq:def_gv_form}
\end{equation}
In general, the described geometric conditions are not satisfied,
and so the $n_{\beta}$ do not count curves in class $\beta$.%
\footnote{It is believed though that a (non-algebraic) deformation of $K_{S}$ exhibits
these conditions.%
} They can nonetheless be defined via equation \eqref{eq:def_gv_form},
or alternatively, via generating functions as follows.
\begin{defn}
(Stated as a formula by Gopakumar-Vafa in \cite{GV_M_theory_I,GV_M_theory_II};
stated as a definition by Bryan-Pandharipande in \cite{bryan_panda_bps}).
Assume $\beta$ to be primitive. Then the local BPS state counts $n_{d\beta}$,
for $d\geq1$, are defined as rational numbers via the formula
\begin{equation}
\sum_{l=1}^{\infty}I_{K_{S}}(l\beta)\, q^{l}=\sum_{d=1}^{\infty}n_{d\beta}\sum_{k=1}^{\infty}\frac{1}{k^{3}}\, q^{dk}.\label{eq:defn_gv_gen_fn}
\end{equation}
\end{defn}
\begin{conjecture}
\label{conjecture_GV}(Attributed to Gopakumar-Vafa; stated in \cite{bryan_panda_bps} by Bryan-Pandharipande). For all curve classes $\beta\in\hhh_{2}(S,\Z)$,
\[
n_{\beta}\in\Z.
\]
\end{conjecture}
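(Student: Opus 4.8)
The plan is to fix a primitive class $\beta$ and abbreviate $I(l) := I_{K_S}(l\beta)$ and $n(d) := n_{d\beta}$, and to read the defining identity \eqref{eq:defn_gv_gen_fn} coefficientwise. Comparing coefficients of $q^{l}$ gives $I(l) = \sum_{d \mid l} (l/d)^{-3}\, n(d)$, i.e.\ $I = g * n$ as a Dirichlet convolution with the completely multiplicative function $g(k) = k^{-3}$. Since the Dirichlet inverse of $g$ is $g^{-1}(k) = \mu(k)\, k^{-3}$, Möbius inversion yields the closed form
\[
n(d) = \sum_{k \mid d} \frac{\mu(k)}{k^{3}}\, I(d/k).
\]
For primitive $\beta$ (the case $d = 1$) this reads $n(\beta) = I(\beta)$, so already the claim forces $I(\beta) \in \Z$; in general the task is to prove that the displayed alternating sum of \emph{rational} numbers is an integer. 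This inversion is entirely routine, and the content lies in showing that the family $\{I(l)\}$ satisfies enough congruences for the required cancellation to occur.

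First I would reduce integrality to a local, prime-by-prime statement. Fixing a prime $p$ and arguing by strong induction on $d$, one has $n(d) \equiv I(d) - \sum_{k \mid d,\, k > 1} k^{-3}\, n(d/k) \pmod{\Z_{(p)}}$, and the inductive hypothesis disposes of every term with $p \nmid k$. The surviving, potentially non-$p$-integral, terms are those with $p \mid k$, whose factor $k^{-3}$ contributes $p$-adic poles of order $3$. Pairing each such term against its prime-to-$p$ partner, one finds that $n(d) \in \Z_{(p)}$ for all $d$ is equivalent to a family of Gauss--Kummer-type congruences relating $I(pm)$ and $p^{-3} I(m)$ for the local Gromov--Witten invariants. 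Thus the problem is transformed from a statement about a single inversion into a statement about the $p$-adic comparison of Gromov--Witten invariants in classes differing by a factor of $p$.

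The geometric input enters through the toric structure. For a toric Del Pezzo $S$ the threefold $K_{S}$ is a toric Calabi--Yau, so its \emph{all-genus} generating function is computed by the topological vertex as an explicit finite sum, over tuples of partitions attached to the edges and vertices of the toric diagram, of products of the vertex factors $C_{\lambda\mu\nu}(q)$ and edge/framing contributions, with $q = e^{i\lambda}$. After clearing the $(q^{1/2} - q^{-1/2})$ denominators each building block lies in $\Z[q^{\pm 1/2}]$, so the partition function $Z(Q)$ is a concrete element of $1 + Q\,\Z[q^{\pm 1/2}][[Q]]$, with $Q$ recording the curve class. The genus-$0$ series $\sum_{l} I(l\beta)\, q^{l}$ of \eqref{eq:defn_gv_gen_fn} is the coefficient of $\lambda^{-2}$ in $\log Z$, so it suffices to establish the full Gopakumar--Vafa integrality
\[
\log Z(Q) = \sum_{\beta}\sum_{g \geq 0}\sum_{k \geq 1} \frac{1}{k}\, n_{g,\beta} \left(2\sin\frac{k\lambda}{2}\right)^{2g-2} Q^{k\beta}, \qquad n_{g,\beta} \in \Z,
\]
and to extract the $g = 0$ part, which reproduces $n_{d\beta}$ and recovers the integrality proved by Peng and Konishi.

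The crux, and the step I expect to be the main obstacle, is proving that $Z(Q)$ satisfies the Frobenius congruence $Z(Q)\big|_{q \mapsto q^{p},\, Q \mapsto Q^{p}} \equiv Z(Q)^{p} \pmod{p}$ at the level of the vertex sum, and then upgrading it to the higher congruences modulo $p^{a}$ that the $(2\sin(k\lambda/2))^{2g-2}$ denominators demand. Concretely, one must control the $p$-adic behaviour of the quantum-dimension factors $C_{\lambda\mu\nu}(q)$ under $q \mapsto q^{p}$ and match the resulting $p^{-3}$ poles against the Gauss--Kummer congruences isolated in the second step. Granting these congruences, a general lemma on logarithms of Frobenius-compatible series in $1 + Q\,\Z[q^{\pm1/2}][[Q]]$ forces the coefficients $n_{g,\beta}$ in the Gopakumar--Vafa basis to be $p$-integral for every $p$, hence integers; the Möbius inversion and the genus-$0$ extraction then complete the argument. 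I anticipate that organizing these vertex congruences uniformly across the finitely many toric Del Pezzo surfaces, rather than any single local estimate, is where the real work lies.
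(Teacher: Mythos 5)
The first thing to note is that the paper does not prove this statement at all: it is stated as a conjecture, open for general smooth Del Pezzo surfaces, and the only cases in which it is known --- the toric Del Pezzo surfaces --- are attributed to Peng \cite{GV_inv_integral_local_toric_CY} and, for general toric Calabi-Yau threefolds, to Konishi \cite{Konishi_int_GV_inv}. The paper then \emph{uses} that external result, together with its own theorem \ref{thm:equivalence}, to deduce corollary \ref{cor:integrality_of_toric_relative_BPS}. So your proposal is not competing with a proof inside the paper; it is, in effect, a sketch of the strategy of the cited works of Peng and Konishi.

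Judged as such, it has a genuine gap, and you locate it yourself: the entire argument hinges on the Frobenius-type congruence for the topological vertex partition function, $Z(Q)\big|_{q\mapsto q^{p},\,Q\mapsto Q^{p}}\equiv Z(Q)^{p}\pmod{p}$, together with its upgrade to congruences modulo higher powers of $p$ that the $k^{-3}$ (genus $0$) and $\left(2\sin(k\lambda/2)\right)^{2g-2}$ (higher genus) denominators demand --- and these are introduced with ``granting these congruences.'' Everything you actually establish (the M\"obius inversion $n(d)=\sum_{k\mid d}\mu(k)k^{-3}I(d/k)$, the reduction of $p$-integrality to congruences relating $I(pm)$ and $I(m)$) is routine bookkeeping that carries no arithmetic weight; the deferred congruences on the vertex factors $C_{\lambda\mu\nu}(q)$ under $q\mapsto q^{p}$ are precisely the hard content of Peng's paper (whose binomial-coefficient congruence the present paper borrows as lemma \ref{lem:binomial}) and of Konishi's considerably longer argument. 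There is also a structural limitation you should flag: the vertex formalism exists only for toric $K_{S}$, so even if completed, your argument would prove the conjecture only for the toric Del Pezzo surfaces (those of degree $\geq 6$), not ``for all curve classes $\beta\in\hhh_{2}(S,\Z)$'' on an arbitrary smooth Del Pezzo surface as the statement demands; for non-toric $S$ the conjecture remains open, and no vertex computation will reach it.
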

Conjecture \ref{conjecture_GV} was proven by Peng in \cite{GV_inv_integral_local_toric_CY}
in the case of toric Del Pezzo surfaces, which are the Del Pezzo surfaces
of degree $\geq6$. More generally, a proof for toric Calabi-Yau threefolds
was given by Konishi in \cite{Konishi_int_GV_inv}.

\subsection{Relative BPS state counts}\label{intro-rel-BPS}

The definitions and conjectures relating to relative BPS state counts
mirror the discussion of the previous section. These invariants were
introduced by Gross-Pandharipande-Siebert in \cite{Gross-Panda-Siebert}.
Whereas the previous section is concerned with \emph{local Calabi-Yau threefolds},
this one deals with \emph{open Calabi-Yau surfaces}, which are examples of
log Calabi-Yau surfaces. 
\begin{defn}
\label{def-log-cy}(See \cite{Gross-Panda-Siebert}). Let $S$ be
a smooth surface and let $D\subset S$ a smooth divisor. Let furthermore
$\gamma\in\hhh_{2}(S,\Z)$ be nonzero. The pair $(S,D)$ is called
\emph{log Calabi-Yau with respect to} $\gamma$ if 
\begin{equation}
D\cdot\gamma=c_{1}(S)\cdot\gamma.\label{eq:log_cy}
\end{equation}
If equation \eqref{eq:log_cy} holds for all $\gamma\in\hhh_{2}(S,\Z)$,
which is the situation considered below, we abbreviate and say that
$(S,D)$ is a \emph{log Calabi-Yau surface pair}. Sometimes the divisor
$D$ is excluded from the notation.
\end{defn}
The discussion in \cite{Gross-Panda-Siebert} is concerned with any
log Calabi-Yau surface pair. For our purposes, we restrict to Del
Pezzo surfaces with a smooth anticanonical divisor. We mention in section
\ref{sub:Integrality-of-other-rel-bps} below an integrality result by Reineke in
\cite{reineke-refined}, which concerns relative BPS state counts associated to
blow ups of the projective plane relative to the toric divisor.%
\footnote{The precise geometry is more elaborate. In particular, in order to obtain
a smooth divisor, the singular points of the toric divisor are removed. The authors
in \cite{Gross-Panda-Siebert} prove that (contrary to expectation) invariants can be defined for this open geometry.%
}

As in the previous section, let $S$ be a Del Pezzo surface and denote
by $E$ a smooth effective anticanonical divisor on it. Then the
pair $(S,E)$ is log Calabi-Yau. This is the open Calabi-Yau geometry
considered in this section. This terminology is justified by the fact that the canonical bundle of
$S$ is trivial away from $E$. Let $\beta\in\hhh_{2}(S,\Z)$
be the class of a curve and set $w=E\cdot\beta$. Note that the moduli
stack of genus $0$ curves in $S$ is of virtual dimension $w-1$.
A generic curve representing $\beta$ would meet $E$ in $w$ points
of simple tangency. Instead, we can impose that the curve meets $E$
in fewer points with higher tangencies, cutting down the virtual dimension.
Considering the maximal case, denote by $\overline{M}(S/E,w)$ the
moduli stack which roughly parametrizes genus 0 relative stable maps $f:C\to S$ representing
$\beta$ and such that the image of $C$ meets $E$ in one point of
tangency $w$. Then $\overline{M}(S/E,w)$ is of virtual dimension
$0$ and the degree of its virtual fundamental class, 
\[
N_{S}[w]:=\int_{[\overline{M}(S/E,w)]^{vir}}1\in\mathbb{Q},
\]
is called the \emph{genus $0$ relative Gromov-Witten invariant of
degree $\beta$ and maximal tangency of $(S,E)$}. Denote by $\iota:P\to S$
a rigid element of $\overline{M}(S/E,w)$. For $k\geq1$, denote by
$M_{P}[k]$ the contribution of $k$-fold multiple covers of $P$
to $N_{S}[kw]$ (see \cite{Gross-Panda-Siebert} for precise definitions).
\begin{prop}
(Proposition $6.1$ in \cite{Gross-Panda-Siebert}).
\[
M_{P}[k]=\frac{1}{k^{2}}\binom{k(w-1)-1}{k-1}.
\]
\end{prop}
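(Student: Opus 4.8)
The plan is to reduce the computation to a local, torus-equivariant integral over the moduli space of multiple covers of the rigid curve $P\cong\IP^{1}$, and to evaluate the resulting combinatorial sum by Lagrange inversion. First I would pin down the local geometry. Since $\iota\colon P\to S$ is a rigid genus $0$ relative stable map of maximal tangency meeting $E$ at a single point $p$ to order $w$, adjunction together with $E=-K_{S}$ gives $P\cdot P=w-2$, so that $N_{P/S}\cong\mc O_{\IP^{1}}(w-2)$. As $M_{P}[k]$ is by construction the local contribution of covers of $P$ to $N_{S}[kw]$, I would replace $S$ by a formal neighborhood of $P$, namely the total space $\tott(\mc O_{\IP^{1}}(w-2))$ with $E$ meeting the zero section at $p$ to order $w$. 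This produces a self-contained model carrying two commuting $\IC^{*}$-actions, one on the base $\IP^{1}=P$ fixing $0$ and $p$, the other scaling the fiber.

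Second, I would describe the cover locus. A $k$-fold cover contributing to $N_{S}[kw]$ is a degree $k$ map $f\colon C\to P$ from a genus $0$ curve totally ramified over $p$ (so that $\iota\circ f$ has tangency $kw$ at $\iota(p)\in E$), together with the expanded/relative structure along $E$. Following \cite{Gross-Panda-Siebert}, $M_{P}[k]$ is then the integral over the space $\overline{M}_{0,1}(\IP^{1}/p,(k))$ of relative stable maps of the Euler class of the excess obstruction bundle,
\[
M_{P}[k]=\int_{[\overline{M}_{0,1}(\IP^{1}/p,(k))]^{vir}}\euler\left(R^{1}\pi_{*}f^{*}\mc N\right),
\]
where $\mc N$ is the log-twisted normal line bundle of $P$ in $S$ along $E$ (the bundle $\mc O_{\IP^{1}}(w-2)$ corrected by the tangency order at $p$). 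The structural input that makes this well defined is that $(S,E)$ is log Calabi–Yau, so $c_{1}(T_{S}(-\log E))\cdot P=(-K_{S}-E)\cdot P=0$; this forces the dimension of the cover moduli and the rank of the obstruction bundle to agree.

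Third, I would compute this integral by Atiyah–Bott and virtual localization for the $\IC^{*}$-action on the base $\IP^{1}$. The fixed loci are indexed by how the degree $k$ and the full ramification distribute over the fixed points $0$ and $p$ and the rubber components sprouting at $E$; each contributes an explicit ratio of equivariant Euler classes of the moving parts of the tangent and obstruction spaces over the totally ramified cover $z\mapsto z^{k}$. Summing these contributions yields a series in $q$ whose coefficients I expect to be of Fuss–Catalan type. Concretely, $\binom{k(w-1)-1}{k-1}=\tfrac{1}{w-1}\binom{k(w-1)}{k}$ is, up to the Lagrange normalization, the coefficient extracted from the algebraic function $y(q)$ solving $y=q(1+y)^{w-1}$, while the prefactor $\tfrac{1}{k^{2}}$ is the intrinsic genus $0$ multiple-cover normalization — the relative analogue of the $\tfrac{1}{k^{3}}$ of the Aspinwall–Morrison formula quoted above, lowered by one because passing from the local threefold to the log surface drops the fiber direction.

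The main obstacle is twofold. First, identifying $\mc N$ with exactly the right log and tangency twist, so that the equivariant obstruction bundle restricted to the fixed loci is correct; a sign or off-by-one error here would spoil the binomial. Second, recognizing the localization sum as a Lagrange-inversion coefficient: the individual fixed-point terms are unwieldy rational expressions, and the clean closed form emerges only after a hypergeometric resummation. The decisive step is therefore to package the localization sum as a single residue $\mathrm{Res}_{t=0}$ and match it against the generating function of the Fuss–Catalan numbers $\binom{k(w-1)}{k}$, from which the stated value of $M_{P}[k]$ follows.
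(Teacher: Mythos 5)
First, a remark on the comparison itself: the paper you were given contains no proof of this proposition. It is imported verbatim, with attribution, as Proposition 6.1 of \cite{Gross-Panda-Siebert}, where $M_{P}[k]$ is \emph{defined} as the contribution to $N_{S}[kw]$ of an entire connected component of the moduli space of relative stable maps -- including maps whose targets are expanded degenerations of $(S,E)$. Your proposal can therefore only be measured against that definition and the proof in \cite{Gross-Panda-Siebert}, not against anything in the present paper.

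Measured that way, there is a genuine gap, and it sits at your second step. You replace the contributing component by the locus of degree $k$ covers of $P$ totally ramified over $p$, i.e.\ $\overline{M}_{0,1}(\IP^{1}/p,(k))$, with obstruction bundle $R^{1}\pi_{*}f^{*}\mc N$ for a line bundle $\mc N$ on $P$. But your own dimension constraint pins $\mc N$ down: from
\[
0\to T_{P}(-\log p)\to\iota^{*}T_{S}(-\log E)\to\mc N\to0,
\]
with $\deg T_{P}(-\log p)=1$ and $\deg\iota^{*}T_{S}(-\log E)=(c_{1}(S)-E)\cdot\beta=0$, one gets $\mc N\cong\mc O_{\IP^{1}}(-1)$ regardless of $w$; equivalently, requiring $\operatorname{rank}R^{1}\pi_{*}f^{*}\mc N=k-1=\dim\overline{M}_{0,1}(\IP^{1}/p,(k))$ forces $\deg\mc N=-1$. (The alternative reading of your ``$\mc O_{\IP^{1}}(w-2)$ corrected by the tangency order,'' namely $\mc O(w-2-w)=\mc O(-2)$, has $R^{1}$ of rank $2k-1$ and fails the dimension match.) Consequently the integral you propose is, for each $k$, a single rational number with no dependence on $w$ at all, whereas the asserted answer $\frac{1}{k^{2}}\binom{k(w-1)-1}{k-1}$ does depend on $w$: already for $k=2$ it equals $\frac{2w-3}{4}$, i.e.\ $1/4,\,3/4,\,5/4,\dots$ for $w=2,3,4,\dots$. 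No localization scheme or Lagrange-inversion resummation applied downstream can reintroduce a variable that is absent from the integrand and the moduli problem.

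The missing ingredient is precisely the part of the component you discarded. Your claim that a contributing cover must be totally ramified over $p$ is correct only for maps to the \emph{unexpanded} target. Once the target is allowed to expand along $E$ -- which is forced by properness of the relative moduli space and is built into the definition of $M_{P}[k]$ in \cite{Gross-Panda-Siebert} -- a cover of $P$ with an arbitrary ramification profile $\mu=(\mu_{1},\dots,\mu_{r})$ over $p$ also contributes: its $r$ contact points, which meet $E$ with orders $w\mu_{1},\dots,w\mu_{r}$ (note the factor $w$), are joined inside the rubber $\IP(N_{E/S}\oplus\mc O_{E})$ by a genus $0$ curve meeting the far divisor at a single point of order $kw$. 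For instance, for $k=2$ the double covers unramified over $p$, capped by a rubber component with profile $(w,w)\to(2w)$, lie in the same connected component and carry virtual class. It is exactly through these strata, with their contact orders scaled by $w$, that the tangency enters the formula; any correct proof, in particular the one in \cite{Gross-Panda-Siebert}, must account for them (or sidestep them by deformation invariance to a geometry where the full invariant is independently known). Your adjunction computation $N_{P/S}\cong\mc O_{\IP^{1}}(w-2)$, the log Calabi-Yau dimension check, and the identity $\binom{k(w-1)-1}{k-1}=\frac{1}{w-1}\binom{k(w-1)}{k}$ are all correct, but they are the easy parts; the content of the proposition lives in the expanded-degeneration strata that the proposal never touches.
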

Consequently:
\begin{defn}
\label{def-rel-bps}(Paragraph §6.3 in \cite{Gross-Panda-Siebert}).
For $d\geq1$ , the relative BPS state counts $n_{S}[dw]\in\Q$ are
defined by means of the equality
\begin{equation}
\sum_{l=1}^{\infty}N_{S}[lw]\, q^{l}=\sum_{d=1}^{\infty}n_{S}[dw]\sum_{k=1}^{\infty}\frac{1}{k^{2}}\binom{k(dw-1)-1}{k-1}\, q^{dk}.\label{eq:defn_rel_bps_gen_fn}
\end{equation}
\end{defn}
\begin{conjecture}
\label{conjecture_GPS}(Conjecture $6.2$ in \cite{Gross-Panda-Siebert}).
Let $\beta\in\hhh_{2}(S,\Z)$ be an effective curve class and set
$w=\beta\cdot E$. Then, for all $d\geq1$,
\[
n_{S}[dw]\in\Z.
\]
\end{conjecture}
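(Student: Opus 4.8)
The plan is to connect the relative invariants of $(S,E)$ to the local invariants of $K_{S}$ by the genus-$0$ local/relative correspondence, and then to deduce the integrality of the $n_{S}[dw]$ from that of the local BPS counts $n_{d\beta}$, which are integers by Peng. The single geometric input I would use is the comparison
\[
N_{S}[lw]=(-1)^{lw-1}\,lw\,I_{K_{S}}(l\beta),\qquad l\geq 1,
\]
between the degree-$l\beta$ maximal-tangency relative invariant and the genus-$0$ local invariant. This identity can be obtained by degeneration to the normal cone of $E$ together with a comparison of the two obstruction bundles; as a check, for $S=\ptwo$ and $\beta$ the line class one has $w=3$ and $N_{\ptwo}[3]=3\cdot I_{K_{\ptwo}}(H)=3\cdot n_{H}=9$, the classical count of flex lines. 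Everything after this point is formal and arithmetic.

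Extracting the coefficient of $q^{l}$ from \eqref{eq:defn_gv_gen_fn} gives $l^{3}I_{K_{S}}(l\beta)=\sum_{d\mid l}d^{3}\,n_{d\beta}$, so the correspondence rewrites each $N_{S}[lw]$ as an explicit $\Z$-linear combination of the $n_{d\beta}$. Since \eqref{eq:defn_rel_bps_gen_fn} is, for fixed $w$, a unitriangular linear system (with $1$'s on the diagonal, ordered by divisibility) relating $(N_{S}[lw])_{l}$ to $(n_{S}[dw])_{d}$, it inverts uniquely over $\Q$, and the composite map $(n_{d\beta})_{d}\mapsto(n_{S}[dw])_{d}$ is linear. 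I would therefore reduce, by linearity, to the elementary configurations in which $n_{d_{0}\beta}=1$ and all other local counts vanish. Rescaling $l=d_{0}m$ identifies such a configuration for the parameter $w$ with the single-rigid-curve configuration for the parameter $w'=d_{0}w$: the kernels $\tfrac{1}{k^{2}}\binom{k(dw-1)-1}{k-1}$ and the series data match term by term. This yields the clean formula
\[
n_{S}[dw]=\sum_{d_{0}\mid d}n_{d_{0}\beta}\,\nu_{d_{0}w}(d/d_{0}),
\]
where $\nu_{w'}(m)$ denotes the relative BPS count attached to a single rigid curve of tangency $w'$. As $n_{d_{0}\beta}\in\Z$, Conjecture \ref{conjecture_GPS} reduces to the integrality of every $\nu_{w'}(m)$.

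For the single-curve configuration the correspondence gives $N'[lw']=(-1)^{lw'-1}w'/l^{2}$, and inverting \eqref{eq:defn_rel_bps_gen_fn} expresses $\nu_{w'}(m)$ as an alternating sum over chains in the divisor lattice of $m$, whose summands are products of the Fuss--Catalan-type numbers $\tfrac{1}{k^{2}}\binom{k(m'w'-1)-1}{k-1}$ with the factors $w'/l^{2}$. Here I would exploit the identity $\tfrac{1}{k}\binom{pk-1}{k-1}=\tfrac{1}{pk}\binom{pk}{k}$ and the generating function $\sum_{k\geq1}\tfrac{1}{k}\binom{pk-1}{k-1}x^{k}=\log\mc B_{p}(x)$, where $\mc B_{p}=1+x\,\mc B_{p}^{p}$ is the generalized binomial series, to package these coefficients and reduce integrality to a statement about $p$-adic valuations. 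The claim $\nu_{w'}(m)\in\Z$ would then be proved one prime at a time via Kummer's theorem, showing that the prime-power denominators introduced by the factors $1/k^{2}$, $1/k^{3}$ and the M\"obius inversion are cancelled by the carries in the base-$p$ expansions governing the binomial numerators.

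The geometric correspondence, though indispensable, I expect to be routine or citable; the real difficulty is this last cancellation. Already $\nu_{w'}(2)$ equals $\tfrac{w'(w'-2)}{2}$ or $-\tfrac{w'(w'-1)}{2}$ according to the parity of $w'$, and each arises only after a factor of $4$ cancels, so that no crude valuation bound suffices and one must track the carries across the entire alternating sum at once. I expect this uniform (in $w'$ and $m$) $p$-adic bookkeeping to be the crux of the proof, with the parity of $w$ in the sign $(-1)^{lw-1}$ forcing two parallel cases.
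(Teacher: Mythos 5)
Your overall strategy is the same as the paper's: take the correspondence $N_{S}[lw]=(-1)^{lw+1}\,lw\,I_{K_{S}}(l\beta)$ as geometric input (Theorem \ref{thm_GGH}), observe that both BPS definitions are invertible triangular linear systems indexed by divisibility, and reduce Conjecture \ref{conjecture_GPS} to an arithmetic integrality statement to be combined with Peng/Konishi. Your linear-algebra packaging is a valid variant: the reduction to configurations with $n_{d_{0}\beta}=1$ is legitimate, the rescaling $w'=d_{0}w$ does match the two systems term by term, and your numbers $\nu_{d_{0}w}(d/d_{0})$ are precisely the entries $(C^{-1})_{d,d_{0}}\,(-1)^{d_{0}w+1}d_{0}w$ of the inverse of the matrix $C$ of Lemma \ref{lem:formula_for_C}; your spot checks ($l^{3}I_{K_{S}}(l\beta)=\sum_{d|l}d^{3}n_{d\beta}$, the two values of $\nu_{w'}(2)$) are correct.

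The genuine gap is at exactly the point you flag as the crux: the claim $\nu_{w'}(m)\in\Z$ is announced (``would then be proved one prime at a time via Kummer's theorem''), not proved, and the tool you name is not adequate. Kummer's theorem controls the $p$-adic valuation of a single binomial coefficient, but here the individual terms are typically \emph{not} divisible by $p^{2\alpha}$ (for $p^{\alpha}$ exactly dividing the relevant index); only suitably paired terms are. What is actually needed is a congruence \emph{between} binomial coefficients at adjacent levels, applied after regrouping the divisor sum into pairs $k\in\{p^{\alpha-1}l,\,p^{\alpha}l\}$: namely Peng's congruence $\binom{p^{\alpha}a-1}{p^{\alpha}b-1}\equiv\binom{p^{\alpha-1}a-1}{p^{\alpha-1}b-1}\pmod{p^{2\alpha}}$ (Lemma \ref{lem:binomial}, valid for $p$ odd, or $p=2$, $\alpha\geq2$), plus a separate sign-twisted congruence $\binom{2ka-1}{2k-1}\equiv(-1)^{a+1}\binom{ka-1}{k-1}\pmod 4$ for the exceptional case $p=2$, $\alpha=1$ (Lemma \ref{lem:binomial_special_case}) --- this exceptional case is precisely the one where the Möbius sign and the sign $(-1)^{ktw}$ fail to alternate, i.e.\ your ``two parallel parity cases.'' A second, structural, issue compounds this: by choosing to invert toward the relative side, you must analyze $C^{-1}$-type quantities, which (as you note) are alternating sums over \emph{chains} in the divisor lattice of $m$, a much messier object. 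The paper avoids this entirely: it proves integrality of the \emph{forward} matrix $C$, whose entries \eqref{eq:C_st} are single Möbius sums over square-free complementary divisors, and then gets integrality of $C^{-1}$ for free from $\det C=1$ and Cramer's rule. Without the pairing argument and these two congruence lemmas (or an equivalent input, such as Reineke's integrality giving \eqref{explicit}), your argument does not yet establish the conjecture; with them, it would, and you would do well to also route the induction through $C$ rather than its inverse.
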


\subsection{Main result}

Our main result is based on the following theorem, which was proved
for $\ptwo$ by Gathmann in \cite{gathmannp2}. A proof for all Del
Pezzo surfaces was announced by Graber-Hassett.
\begin{thm}
\label{thm_GGH}(Gathmann for $\ptwo$ in \cite{gathmannp2}, for
general $S$ announced by Graber-Hassett). Let $S$ be a Del Pezzo
surface and denote by $E$ a smooth effective anticanonical divisor
on it. Let $\beta\in\hhh_{2}(S,\Z)$ be an effective curve class and
set $w=\beta\cdot E$. Then the following identity of Gromov-Witten
invariants holds:
\begin{equation}
N_{S}[w]=(-1)^{w+1}\, w\, I_{K_{S}}(\beta).\label{eq:GGH-formula2}
\end{equation}
\end{thm}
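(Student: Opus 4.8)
The plan is to realize both invariants as integrals against the virtual class on the space of genus~$0$ stable maps to $S$ of class $\beta$, and to compare them by concentrating the relevant data along $E$. Write $s_{E}\in\hhh^{0}(S,\mc O_{S}(E))$ for the section cutting out $E$; it yields the exact sequence
\[
0\to\mc O_{S}(-E)\xrightarrow{\,\cdot\,s_{E}\,}\mc O_{S}\to\mc O_{E}\to 0.
\]
Pulling back by $ev$ and pushing forward along $\pi$ over $\overline{M}_{0,1}(S,\beta)$, and using that for a genus~$0$ curve $C$ of class $\beta$ the line bundle $f^{*}\mc O_{S}(-E)$ has degree $-w<0$ (so $\pi_{*}ev^{*}\mc O_{S}(-E)=0$, while $R^{1}\pi_{*}ev^{*}\mc O_{S}=0$ in genus $0$), the long exact sequence collapses to
\[
0\to\mc O\to\pi_{*}ev^{*}\mc O_{E}\to R^{1}\pi_{*}ev^{*}K_{S}\to 0.
\]
First I would use this to rewrite $\euler\left(R^{1}\pi_{*}ev^{*}K_{S}\right)$, and hence $I_{K_{S}}(\beta)$, entirely in terms of the rank~$w$ bundle $\pi_{*}ev^{*}\mc O_{E}$, that is, in terms of the length-$w$ scheme $ev^{-1}(E)$ recording how the domain curves meet $E$; the trivial sub-line-bundle $\mc O$ accounts for the drop in rank from $w$ to $w-1$. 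This rewrites the local invariant as a \emph{tautological} integral over $\overline{M}_{0,0}(S,\beta)$ built from a tangency condition, which is precisely what opens the door to a comparison with the relative invariant.

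Next I would invoke the degeneration formula for the deformation of $S$ to the normal cone of $E$. The special fibre is $S\cup_{E}P$ with $P=\IP(N_{E/S}\oplus\mc O_{E})$ a $\IP^{1}$-bundle over the elliptic curve $E$, and $N_{S}[w]$ decomposes into relative invariants of the two components glued along $E$. The maximal-tangency condition forces all contact with $E$ to be carried by a single relative marked point, so the degeneration formula expresses $N_{S}[w]$ as a sum over the distributions of the class and of the tangency order, in which the ``rubber'' contributions supported on $P$ supply precisely the multiple-cover combinatorics $\tfrac{1}{k^{2}}\binom{k(w-1)-1}{k-1}$ recorded above. Carrying the reduced local class from the first step through the same degeneration — with the weights on $P$ dictated by the conormal bundle $\mc O_{E}(-E)=N_{E/S}^{\vee}$ — identifies it component by component with this relative decomposition, so that the two invariants agree up to the fibre/rubber normalization.

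It then remains to extract the universal constant $(-1)^{w+1}\,w$. The sign $(-1)^{w-1}=(-1)^{w+1}$ is carried by the top Chern class of the rank~$(w-1)$ obstruction bundle under the dualization implicit in $K_{S}=\mc O_{S}(-E)$, while the factor $w=E\cdot\beta$ is the length of the tangency scheme $ev^{-1}(E)$ and enters through the evaluation along $E$; the overall normalization is then forced by matching the local multiple-cover factor $\tfrac{1}{k^{3}}$ against the relative one $\tfrac{1}{k^{2}}\binom{k(w-1)-1}{k-1}$. I expect the main obstacle to be the rigorous matching of the two virtual classes across the degeneration: controlling the expanded degenerations and the rubber moduli, accounting for the automorphisms of multiple covers, and above all pinning down the sign and the factor $w$ \emph{uniformly} in $\beta$ rather than merely verifying small cases such as the nine flexes of a smooth plane cubic ($S=\ptwo$, $w=3$, giving $N_{\ptwo}[3]=9=(-1)^{4}\cdot 3\cdot 3$). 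For $S=\ptwo$ this matching is achieved by Gathmann through his recursion for relative Gromov–Witten invariants in \cite{gathmannp2}, and the announced argument of Graber–Hassett treats general $S$.
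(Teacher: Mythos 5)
You should first be aware that the paper contains no proof of Theorem \ref{thm_GGH} to compare yours against: the result is imported as an external input, proved by Gathmann for $\ptwo$ in \cite{gathmannp2} and only \emph{announced} by Graber--Hassett for general Del Pezzo surfaces. Precisely because the general case is unpublished, the paper's actual main result, Theorem \ref{thm:equivalence}, is formulated conditionally -- it takes relation \eqref{eq:GGH-formula} as a hypothesis on two abstract sequences and proves a purely arithmetic equivalence of integrality statements. So your proposal has to be judged as an attempted independent proof of the geometric statement, and as such it has genuine gaps.

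Your first step is sound and standard: granting the usual caveat about stable maps with components inside $E$ (over which $ev^{-1}(E)$ is not finite and $\pi_{*}ev^{*}\mc O_{E}$ is not locally free), the sequence $0\to\mc O\to\pi_{*}ev^{*}\mc O_{E}\to R^{1}\pi_{*}ev^{*}K_{S}\to 0$ does rewrite $I_{K_{S}}(\beta)$ as a tautological tangency integral, and the overall architecture (tautological rewriting plus degeneration to the normal cone of $E$) is the shape of the proof of the log-local correspondence that was eventually published by van Garrel--Graber--Ruddat. But the core of your argument is asserted rather than proven: in the degeneration formula the sum runs over \emph{all} distributions of the curve class between $S$ and $\IP(N_{E/S}\oplus\mc O_{E})$ and over \emph{all} contact profiles along the gluing divisor, and the claim that only the single-contact-point, maximal-tangency terms survive is a nontrivial vanishing theorem -- that is exactly where the difficulty of the theorem lives, and your sketch supplies no argument for it. Moreover, the combinatorial factor $\tfrac{1}{k^{2}}\binom{k(w-1)-1}{k-1}$ is misattributed: it is the multiple-cover contribution $M_{P}[k]$ of a rigid relative curve $P\subset S$ (Proposition $6.1$ of \cite{Gross-Panda-Siebert}), which enters only the \emph{definition} \eqref{eq:defn_rel_bps_gen_fn} of the relative BPS numbers; it is not what rubber contributions in a degeneration formula produce, and it plays no role in Theorem \ref{thm_GGH}, which concerns the Gromov--Witten invariants $N_{S}[w]$ and $I_{K_{S}}(\beta)$ of a single class. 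For the same reason, the constant $(-1)^{w+1}w$ cannot be ``forced by matching'' the factors $\tfrac{1}{k^{3}}$ and $\tfrac{1}{k^{2}}\binom{k(w-1)-1}{k-1}$: comparing those multiple-cover structures is the content of the paper's Theorem \ref{thm:equivalence}, which \emph{assumes} \eqref{eq:GGH-formula}, so invoking it here is circular. Finally, your closing sentence defers the virtual-class matching to Gathmann and to the announced Graber--Hassett argument -- that is, to the very results being proven -- which makes the proposal a strategy outline that ultimately cites its own conclusion.
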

In the present paper, we prove the following theorem:
\begin{thm}
\label{thm:equivalence} Let $\beta\in\hhh_{2}(S,\Z)$ be an effective
non-zero primitive curve class. Consider two sequences of rational
numbers
$$
\left\{ N_{S}[dw]\right\} _{d\geq1} \text{ and } \left\{ I_{K_{S}}(d\beta)\right\} _{d\geq1},
$$
and assume that they are related, for all $d\geq1$, via (cf. equation \eqref{eq:GGH-formula2}):
\begin{equation}
N_{S}[dw]=(-1)^{dw+1}\, dw\, I_{K_{S}}(d\beta).\label{eq:GGH-formula}
\end{equation}
Define two sequences of rational numbers,
$$
\left\{ n_{S}[dw]\right\} _{d\geq1} \text{ and } \left\{ n_{d\beta}\right\} _{d\geq1},
$$
by means of the equations
\eqref{eq:defn_gv_gen_fn} and \eqref{eq:defn_rel_bps_gen_fn}. Then,
$$
n_{S}[dw]\in\Z \text{ for all } d\geq 1,
$$
if and only if
$$
dw\cdot n_{d\beta}\in\Z  \text{ for all } d\geq 1.
$$
\end{thm}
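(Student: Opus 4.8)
The plan is to eliminate the Gromov--Witten invariants altogether and reduce the statement to a purely arithmetic equivalence between the two families of BPS numbers. Write $b_d:=n_S[dw]$ and $c_d:=dw\cdot n_{d\beta}$, so that the claim becomes $\{b_d\}_{d\geq1}\subset\Z$ if and only if $\{c_d\}_{d\geq1}\subset\Z$. First I would read off from \eqref{eq:defn_gv_gen_fn} the relation $I_{K_S}(l\beta)=\sum_{d\mid l}(l/d)^{-3}n_{d\beta}$, insert the hypothesis \eqref{eq:GGH-formula} in the form $I_{K_S}(d\beta)=(-1)^{dw+1}(dw)^{-1}N_S[dw]$, and re-expand $\sum_l N_S[lw]\,q^l$ accordingly. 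Comparing with the defining expansion \eqref{eq:defn_rel_bps_gen_fn} and extracting the coefficient of $q^l$ then gives, after the crucial simplification $\bigl((-1)^{dw}\bigr)^{l/d}=(-1)^{lw}$ which kills the $d$-dependence of every sign, the identity
\begin{equation}
\sum_{d\mid l}d^2\binom{(l/d)(dw-1)-1}{(l/d)-1}\,b_d=(-1)^{lw+1}\sum_{d\mid l}d^2\,c_d.\tag{$\diamondsuit$}
\end{equation}

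The term $d=l$ contributes $l^2 b_l$ on the left and $(-1)^{lw+1}l^2 c_l$ on the right (since $\binom{dw-2}{0}=1$), so dividing $(\diamondsuit)$ by $l^2$ expresses $b_l-(-1)^{lw+1}c_l$ entirely through the proper divisors of $l$, namely
\begin{equation}
b_l-(-1)^{lw+1}c_l=\sum_{\substack{d\mid l\\ d<l}}\frac{1}{(l/d)^2}\left[(-1)^{lw+1}c_d-\binom{(l/d)(dw-1)-1}{(l/d)-1}b_d\right].\tag{$\clubsuit$}
\end{equation}
I would then argue by strong induction on $l$. In the forward direction I assume every $b_d\in\Z$ and, inductively, $c_d\in\Z$ for $d<l$; in the backward direction I assume every $c_d\in\Z$ and, inductively, $b_d\in\Z$ for $d<l$. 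In both cases all $b_d$ and $c_d$ with $d<l$ occurring in $(\clubsuit)$ are known integers, so the sole remaining task is to show that the right-hand side of $(\clubsuit)$ lies in $\Z$; granting this, the forward implication yields $c_l\in\Z$ and the backward implication yields $b_l\in\Z$. Thus both directions collapse onto the single integrality statement that the right-hand sum of $(\clubsuit)$ is an integer.

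The heart of the argument, and the step I expect to be the main obstacle, is establishing the integrality of that sum by a $p$-adic valuation analysis of the binomial coefficients. Setting $k=l/d$, the only dangerous terms are those with $p\mid k$ for some prime $p$, where the factor $1/k^2$ must be absorbed; concretely one needs the bracket in $(\clubsuit)$ to vanish modulo $p^{2v_p(k)}$. Here I would invoke Wolstenholme--Jacobsthal type congruences: rewriting $\binom{k(dw-1)-1}{k-1}=\frac{1}{dw-1}\binom{(dw-1)k}{k}$ and using that $\binom{(dw-1)k}{k}$ agrees with $dw-1$ modulo a power of $p$ that grows like $p^{3v_p(k)}$ forces $\binom{k(dw-1)-1}{k-1}\equiv1$ to that same high power. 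Since $3v_p(k)>2v_p(k)$, the bracket then reduces modulo $p^{2v_p(k)}$ to $b_d-(-1)^{lw+1}c_d$, which the sign identity $(-1)^{lw}=(-1)^{(l/k)w}$ (automatic for odd $p$) matches against the inductively controlled quantity $b_d-(-1)^{dw+1}c_d$, producing the required cancellation.

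The genuinely delicate points, which I would isolate as separate lemmas, are precisely where this valuation margin is smallest: the primes $p=2$ and $p=3$, for which the generalized Wolstenholme congruence weakens below $p^{3v_p(k)}$ and the parities entering the signs must be tracked by hand, and the higher prime powers $k=p^a$ together with composite $k$, where one must deploy the full Jacobsthal bound and carefully weigh $v_p\!\bigl(\binom{k(dw-1)-1}{k-1}-1\bigr)$ against $2v_p(k)$ uniformly in $d$. Once these congruences are in place the induction closes and the equivalence follows.
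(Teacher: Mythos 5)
Your reduction is sound as far as it goes: identity $(\diamondsuit)$ is exactly the paper's relation \eqref{eq:rel_sum} multiplied through by $l^2$, and recasting the theorem as a strong induction on $l$ via the recursion $(\clubsuit)$ is a legitimate reformulation. The gap is in the step you yourself identify as the heart of the argument: the claim that each individual bracket in $(\clubsuit)$ vanishes modulo $p^{2v_p(k)}$, $k=l/d$, is false, and so is the Wolstenholme--Jacobsthal input you invoke to prove it. Concretely, take $w=3$ (the line class on $\ptwo$), $l=4$, and $b_1=c_1=1$ (forced, since $c_1=(-1)^{w+1}b_1$). The $d=1$, $k=4$ term of $(\clubsuit)$ has bracket $-c_1-\binom{7}{3}b_1=-36$, which is not divisible by $2^{2v_2(4)}=16$; equivalently, $\binom{7}{3}=35\equiv 3\pmod{16}$, not $\pm1$, contradicting your claimed congruence for $\binom{k(dw-1)-1}{k-1}$. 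The underlying arithmetic claim, that $\binom{(dw-1)k}{k}$ agrees with $dw-1$ modulo a power of $p$ growing like $p^{3v_p(k)}$, is not a theorem: the sharp congruences of this type (Peng's lemma \ref{lem:binomial}, or refined Jacobsthal for $p\geq5$) relate \emph{consecutive} levels, $\binom{p^{\alpha}a-1}{p^{\alpha}b-1}\equiv\binom{p^{\alpha-1}a-1}{p^{\alpha-1}b-1}\pmod{p^{2\alpha}}$, and telescoping down to the bottom level loses precision at every step, leaving only a modulus independent of $v_p(k)$ (e.g.\ $\binom{18}{9}=48620\equiv 2\pmod 9$ but $\not\equiv 2\pmod{27}$; and $v_5\bigl(\binom{50}{25}-2\bigr)=3<4$). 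So the failure is not confined to $p\in\{2,3\}$: it occurs at every prime-power value of $k$.

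What actually makes the sum in $(\clubsuit)$ integral is cancellation \emph{between} different divisors, namely between $d$ and $pd$ (i.e.\ between $k$ and $k/p$), never within a single term. In the example above, the $d=2$ term equals $\tfrac14(-c_2-9b_2)=\tfrac{1-8b_2}{4}$ (using $b_2+c_2=-1$ from level $2$), and adding the $d=1$ term $-\tfrac{9}{4}$ gives $-2-2b_2\in\Z$; the integrality rests precisely on Peng's pairing congruence $\binom{7}{3}\equiv\binom{3}{1}\pmod{16}$. This pairing is what the paper's proof is organized around: it M\"obius-inverts the cubic multiple-cover relation (lemma \ref{lem:formula_for_C}), so that the transformation matrix $C$ has entries given by signed sums over $k\in I(s/t)$; for $p^{\alpha}\,\|\,s/t$ these sums regroup into pairs $\{p^{\alpha-1}l',p^{\alpha}l'\}$ whose differences are killed modulo $p^{2\alpha}$ by lemma \ref{lem:binomial} (with the separate mod-$4$ lemma \ref{lem:binomial_special_case} for $p=2$, $\alpha=1$), and then triangularity with $\det C=1$ gives both directions of the equivalence at once. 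Your induction could in principle be repaired, but only by building this $d\leftrightarrow pd$ grouping into the inductive step -- at which point you would have reconstructed the paper's argument; as written, the proposal's central arithmetic claim is false and the proof does not close.
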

An immediate consequence is as follows:
\begin{cor}
\label{cor:implication}Conjecture $\ref{conjecture_GV}$ for $K_{S}$
implies conjecture $\ref{conjecture_GPS}$ for $(S,E)$.
\end{cor}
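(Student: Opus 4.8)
The plan is to eliminate the Gromov–Witten invariants entirely and reduce the assertion to a purely arithmetic fact, namely that a certain triangular integer matrix is unimodular. Set $t_d:=dw\,n_{d\beta}$. First I would extract coefficients from the two defining generating functions: \eqref{eq:defn_gv_gen_fn} gives $\ell^{3}I_{K_S}(\ell\beta)=\sum_{d\mid\ell}d^{3}n_{d\beta}$, while \eqref{eq:defn_rel_bps_gen_fn} gives $N_S[\ell w]=\sum_{d\mid\ell}(\ell/d)^{-2}\binom{(\ell/d)(dw-1)-1}{(\ell/d)-1}n_S[dw]$. Feeding the former into the hypothesis \eqref{eq:GGH-formula} and comparing with the latter, after clearing $\ell^{2}$, produces the key relation
\[
\sum_{d\mid\ell}d^{2}\,t_d=(-1)^{\ell w+1}\sum_{d\mid\ell}d^{2}\binom{(\ell/d)(dw-1)-1}{(\ell/d)-1}n_S[dw].
\]
Both sides are divisor-sum (triangular) transforms of $(t_d)$ and $(n_S[dw])$, and the $d=\ell$ terms reveal diagonals $\ell^{2}$ and $\pm\ell^{2}$, so both transforms are invertible over $\Q$.

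Next I would apply Möbius inversion to the left-hand side to solve for $t_\ell$, writing $t_\ell=\sum_{d\mid\ell}W_{\ell,d}\,n_S[dw]$, where a short reindexing ($m=dj$, $r=\ell/d$) gives
\[
W_{\ell,d}=\frac{1}{r^{2}}\,\Sigma(dw,r),\qquad \Sigma(a,r):=\sum_{j\mid r}\mu(r/j)\,(-1)^{ja+1}\binom{j(a-1)-1}{j-1},
\]
with $\mu$ the Möbius function and $r=\ell/d$. Since $\Sigma(a,1)=(-1)^{a+1}$, the diagonal entry is $W_{\ell,\ell}=(-1)^{\ell w+1}=\pm1$, so $W$ is lower triangular for the divisibility order with $\pm1$ on the diagonal. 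Hence, as soon as every $W_{\ell,d}$ is shown to be an integer, $W$ is automatically unimodular over $\Z$: restricting to the divisors of any fixed integer gives a triangular integer matrix with $\pm1$ diagonal, whose inverse is again integral. As $(t_d)_d$ and $(n_S[dw])_d$ are then related by an integer matrix with integer inverse, both implications of the theorem follow at once, and the entire statement collapses to the divisibility claim $r^{2}\mid\Sigma(a,r)$ for all $a,r\ge1$.

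I would prove this claim one prime at a time: by the Chinese Remainder Theorem it suffices to show $p^{2m}\mid\Sigma(a,r)$ whenever $p^{m}\,\|\,r$. Writing $r=p^{m}r'$ with $p\nmid r'$ and using that $\mu(r/j)$ vanishes unless the $p$-part of $r/j$ is $1$ or $p$, the sum collapses to
\[
\Sigma(a,r)=\sum_{j'\mid r'}\mu(r'/j')\bigl[D(p^{m}j')-D(p^{m-1}j')\bigr],\qquad D(n):=(-1)^{na+1}\binom{n(a-1)-1}{n-1}.
\]
For $p\ge5$ the two signs inside each bracket coincide (because $p-1$ is even), and a Jacobsthal–Kazandzidis type congruence $\binom{Ap^{m}-1}{Bp^{m}-1}\equiv\binom{Ap^{m-1}-1}{Bp^{m-1}-1}\pmod{p^{3m}}$, with $A=j'(a-1)$ and $B=j'$, makes each bracket divisible by $p^{3m}$, far more than required. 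For $p=3$ the signs still agree, and only a weaker (non-cubic) congruence is available, but since $3m-1\ge2m$ it still delivers the needed $p^{2m}$.

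The main obstacle is the prime $p=2$. There the strong congruence fails and, crucially, the signs $(-1)^{na+1}$ in the consecutive terms $D(2^{m}j')$ and $D(2^{m-1}j')$ need no longer agree — this is exactly the parity phenomenon already visible at $r=2$, where $\Sigma(a,2)=\pm2(a-1)$ or $\pm2(a-2)$ according to the parity of $a$, divisibility by $4$ resting on $a-1$ or $a-2$ being even. For $p=2$ I would instead compute the $2$-adic valuations of the binomials $\binom{2^{m}j'(a-1)-1}{2^{m}j'-1}$ directly via Kummer's theorem and combine them with the explicit sign $(-1)^{a+1}$, which is constant across the \emph{odd} divisors $j'\mid r'$ and hence factors out of the whole sum. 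Pinning down the sharp $2$-adic congruence in this residually delicate case is where the technical heart of the argument lies; everything else is bookkeeping with divisor sums and Möbius inversion.
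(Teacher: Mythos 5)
Your reduction is, up to notation, the paper's own argument: your matrix $W$ coincides with the matrix $C$ of lemma \ref{lem:formula_for_C} up to the row signs $(-1)^{\ell w+1}$, your divisibility claim $p^{2m}\mid\Sigma(a,r)$ for $p^{m}\,\|\,r$ is exactly the paper's claim \eqref{eq:divisibility}, and your brackets $D(p^{m}j')-D(p^{m-1}j')$ are the paper's pairs $f(l)$. Your handling of odd primes is correct (the paper uses Peng's congruence, lemma \ref{lem:binomial}, valid mod $p^{2\alpha}$, rather than the stronger Jacobsthal--Kazandzidis bound, but either suffices), and your unimodularity observation---an integral triangular matrix with $\pm1$ diagonal has integral inverse---is precisely what the implication from conjecture \ref{conjecture_GV} to conjecture \ref{conjecture_GPS} requires.

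The genuine gap is the case $p=2$, $m=1$, $a$ odd, which you explicitly leave open, and the tool you propose for it cannot close it. (Note that for $p=2$, $m\geq2$ both indices $2^{m}j'$ and $2^{m-1}j'$ are even, so the signs agree and Peng's lemma \ref{lem:binomial}, which covers $p=2$, $\alpha\geq 2$, settles that subcase; only $m=1$ is delicate.) What must be proved there is a congruence \emph{relating} the two binomial coefficients, namely $\binom{2k(a-1)-1}{2k-1}\equiv(-1)^{a}\binom{k(a-1)-1}{k-1}\pmod{4}$ for odd $k$; this is the paper's lemma \ref{lem:binomial_special_case}, applied with its parameter equal to $a-1$. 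Kummer's theorem only gives the $2$-adic valuation of each binomial coefficient separately, and these valuations are typically $0$, so no bookkeeping of individual valuations can succeed: for $k=j'=1$ and $a=3$ the two binomials are $3$ and $1$, both odd, and the bracket $D(2)-D(1)=-3-1=-4$ is divisible by $4$ only because of cancellation between two odd terms. Detecting such cancellation requires an actual mod $4$ identity between the two binomials, which the paper obtains by factoring $\binom{2ka-1}{2k-1}=\binom{ka-1}{k-1}\cdot\frac{(2ka-1)(2ka-3)\cdots(2ka-2k+1)}{(2k-1)(2k-3)\cdots 1}$ and evaluating the ratio of odd numbers mod $4$ according to the parity of $a$. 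Your proposal stops exactly where this lemma is needed, so the proof is incomplete at its technical core.
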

Per the integrality result of Peng in \cite{GV_inv_integral_local_toric_CY}
or of Konishi in \cite{Konishi_int_GV_inv} then:
\begin{cor}
\label{cor:integrality_of_toric_relative_BPS}Conjecture $\ref{conjecture_GPS}$
holds for toric Del Pezzo surfaces.
\end{cor}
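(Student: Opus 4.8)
The plan is to obtain the statement as a direct synthesis of the equivalence packaged in Corollary~\ref{cor:implication} with the known integrality of local BPS invariants, so that no new estimate is required. First I would recall that the toric Del Pezzo surfaces are exactly the Del Pezzo surfaces of degree $\geq 6$, namely $\ptwo$, the quadric $\IP^1\times\IP^1$, and the blow-ups of $\ptwo$ at one, two, or three points in general position. For each such $S$ we fix a smooth effective anticanonical divisor $E$, so that $(S,E)$ is a log Calabi-Yau surface pair and $K_S$ is the associated toric local Calabi-Yau threefold.

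The central input is the integrality of the local BPS state counts. By Peng's theorem in \cite{GV_inv_integral_local_toric_CY}---or, more generally, by Konishi's result for all toric Calabi-Yau threefolds in \cite{Konishi_int_GV_inv} applied to the toric threefold $K_S$---Conjecture~\ref{conjecture_GV} holds for every toric Del Pezzo surface $S$, so that $n_\beta\in\Z$ for all curve classes $\beta$. I would then invoke Corollary~\ref{cor:implication}, which asserts precisely that Conjecture~\ref{conjecture_GV} for $K_S$ implies Conjecture~\ref{conjecture_GPS} for $(S,E)$. Unwinding that implication, the arithmetic heart is Theorem~\ref{thm:equivalence}: for a primitive class $\beta$ the integrality $n_{d\beta}\in\Z$ gives \emph{a fortiori} $dw\cdot n_{d\beta}\in\Z$ for every $d\geq 1$, which is exactly the hypothesis under which the equivalence forces $n_S[dw]\in\Z$. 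Thus only the weaker integrality of $dw\cdot n_{d\beta}$ is actually consumed, and it is supplied for free by Peng and Konishi.

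The genuine content, and the step I expect to be the main obstacle, is not arithmetic but lies in the geometric comparison underlying Corollary~\ref{cor:implication}: the equivalence only becomes applicable once the Gromov-Witten invariants $N_S[dw]$ and $I_{K_S}(d\beta)$ are actually known to satisfy the relation \eqref{eq:GGH-formula}, which is Theorem~\ref{thm_GGH}. For $S=\ptwo$ this identity is Gathmann's theorem in \cite{gathmannp2} and the corollary is therefore unconditional; for the remaining toric Del Pezzo surfaces $\IP^1\times\IP^1$ and the blow-ups of $\ptwo$ it rests on the Graber-Hassett announcement of Theorem~\ref{thm_GGH}. Accordingly I would record the conclusion as established outright for $\ptwo$ and as contingent on that announcement for the other toric Del Pezzo surfaces, granting which the result follows immediately by combining the two cited integrality theorems with Corollary~\ref{cor:implication}.
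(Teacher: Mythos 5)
Your proposal is correct and is exactly the paper's argument: the corollary is obtained by combining the integrality of local BPS state counts due to Peng \cite{GV_inv_integral_local_toric_CY} (or Konishi \cite{Konishi_int_GV_inv}) with Corollary~\ref{cor:implication}, which itself rests on Theorem~\ref{thm:equivalence} together with the comparison formula \eqref{eq:GGH-formula2} of Theorem~\ref{thm_GGH}. Your added caveat---that the result is unconditional only for $\ptwo$ via Gathmann \cite{gathmannp2}, and relies on the Graber--Hassett announcement for the remaining toric Del Pezzo surfaces---is an accurate and careful reading of a dependence the paper leaves implicit, but it does not change the route.
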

Our proof of theorem \ref{thm:equivalence} employs some
of the same methods that Peng used in his proof of conjecture \ref{conjecture_GV}
for toric Del Pezzo surfaces, cf. \cite{GV_inv_integral_local_toric_CY}.
Namely, we rely on congruence relations between binomial coefficients.
In particular, lemma \ref{lem:binomial} below was stated in \cite{GV_inv_integral_local_toric_CY}.

\subsection{Relationship to Takahashi's work on log mirror symmetry}

It follows from theorem \ref{thm_GGH} that the relative BPS state
counts are related to the local BPS state counts (see lemma \ref{lem:formula_for_C}
below for the precise relationship). The local invariants are calculated
via mirror symmetry (see Chiang-Klemm-Yau-Zaslow in \cite{local_ms})
and thus, it is expected that the relative BPS state counts are directly
computed via mirror symmetry as well. It is not clear what the $B$-model
is though and there is at present no physical interpretation for these
relative Gromov-Witten invariants. A mirror symmetry conjecture in
this sense was formulated and explored by Takahashi in \cite{log_mirror_local}
for the projective plane. Let us note though that Takahashi considers
an alternative enumerative version of relative BPS state counts.

Takahashi develops logarithmic mirror symmetry for $\ptwo$ relative
to an elliptic curve $E$, and considers the following $A$-model invariants.
Let $d\geq1$. A degree $d$
curve in $\ptwo$ will meet $E$ in a $3d$-torsion point. Choose
a group structure on $E$ such that the zero element $0\in E$ is
a flex point. Choose $P\in E$ a point of order
$3d$ for the chosen group structure. Then $m_{d}$ is defined as
the number of rational degree $d$ curves in $\ptwo$ meeting $E$
only at $P$ in only one branch. The relative BPS state counts $n_{\ptwo}[3d]$
are a virtual extension of $m_{d}$ in the sense that the
rational curves virtually counted by $n_{\ptwo}[3d]$ are allowed to meet $E$
in any $3d$-torsion point, not just at $P$. The attribute virtual
is justified since the $n_{\ptwo}[3d]$ arise from Gromov-Witten invariants.
Based on his calculations and on the work by Gathmann in \cite{gathmannp2},
Takahashi conjectures that the $m_{d}$ are related to the local BPS
state counts $n_{d}$ of $\ptwo$ as follows.
\begin{conjecture}
\label{conj-Takahashi}(Takahashi in \cite{log_mirror_local})
\[
3d\, m_{d}=(-1)^{d+1}n_{d}.
\]
\end{conjecture}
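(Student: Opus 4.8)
The plan is to prove the exact identity by threading Takahashi's enumerative count $m_d$ and the local BPS count $n_d$ through the relative Gromov--Witten and BPS invariants studied in this paper. A degree $d$ class $\beta$ on $\ptwo$ has $w=\beta\cdot E=3d$, and since $(-1)^{3d+1}=(-1)^{d+1}$, theorem \ref{thm_GGH} reads
\[
N_{\ptwo}[3d]=(-1)^{d+1}\,3d\,I_{K_{\ptwo}}(d).
\]
Feeding this into the Gopakumar--Vafa divisor sum \eqref{eq:def_gv_form}, which for $\ptwo$ becomes $I_{K_{\ptwo}}(d)=\sum_{k\mid d}k^{-3}n_{d/k}$, and inverting, yields $(-1)^{d+1}n_d$ as an explicit rational combination of the $N_{\ptwo}[3d']$ with $d'\mid d$; equivalently, the relationship lemma \ref{lem:formula_for_C} expresses the relative BPS counts $n_{\ptwo}[3d]$ in terms of the $n_{d'}$. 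The target identity $3d\,m_d=(-1)^{d+1}n_d$ is thereby reduced to identifying $m_d$ with the appropriate combination of relative invariants.

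The delicate input is the enumerative meaning of $m_d$. The invariant $N_{\ptwo}[3d]$ virtually counts genus $0$ degree $d$ stable maps meeting $E$ in one point of maximal tangency, \emph{with that point free to be any of the $3d$-torsion points} of $E$; Takahashi's $m_d$ instead counts the reduced, unibranch such curves through one \emph{fixed} point $P$ of order exactly $3d$. I would therefore decompose the contributions to $N_{\ptwo}[3d]$ by the order $m\mid 3d$ of the tangency point. By the monodromy action on torsion points as $E$ varies, the contribution is the same at every point of a given order; the points of order exactly $3d$ produce the enumerative term $m_d$, and the points of smaller order produce multiple-cover contributions of lower-degree curves. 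This torsion-order decomposition is a divisor sum running parallel to the Gopakumar--Vafa divisor sum on the local side.

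With both sides organized as divisor sums, I would match them term by term: the factor $k^{-3}$ strips multiple covers on the local side, while the torsion-order decomposition strips the non-maximal-order contributions on the enumerative side. The assertion is that, after applying lemma \ref{lem:formula_for_C} and inverting, the two stripping procedures coincide and the \emph{correction term} produced by the relationship lemma vanishes in the relevant combination, leaving exactly $3d\,m_d=(-1)^{d+1}n_d$. The bookkeeping of these corrections is governed by the same binomial congruences as in lemma \ref{lem:binomial}, so the arithmetic machinery from the proof of theorem \ref{thm:equivalence} is reused here.

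The main obstacle, and the step that lies outside the arithmetic methods of this paper, is the enumerative identification in the second paragraph: showing that a torsion point of order exactly $3d$ contributes precisely $m_d$ with multiplicity one to $N_{\ptwo}[3d]$, and that the points of order $m\mid 3d$ contribute exactly the predicted multiple-cover corrections. Whereas theorem \ref{thm:equivalence} only needs the correction term to be an \emph{integer}, Takahashi's conjecture demands its exact value; this forces a deformation-theoretic and excess-intersection analysis of $\overline{M}(\ptwo/E,3d)$ under deformations of the anticanonical divisor $E$, which is the crux of the argument.
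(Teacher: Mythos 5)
First, a point of calibration: the statement you were asked to prove is stated in the paper as an \emph{open conjecture} (attributed to Takahashi, \cite{log_mirror_local}); the paper offers no proof of it, and says explicitly that it only proves an \emph{analogous} result, namely lemma \ref{lem:formula_for_C}, which is a linear relationship between the virtual invariants $n_{\ptwo}[3d]$ and $n_d$ --- not between the enumerative counts $m_d$ and $n_d$. Your first paragraph, which runs theorem \ref{thm_GGH} through the generating-function definitions to relate $n_{\ptwo}[3d]$ and $n_d$, reproduces exactly what the paper does; up to that point you are on solid ground, but you have only recovered the paper's analogue, not the conjecture.

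The genuine gap is the one you yourself flag in your final paragraph, and it is larger than you suggest. Your plan decomposes $N_{\ptwo}[3d]$ over the $3d$-torsion points of $E$ by exact order, asserting that points of order exactly $3d$ contribute $m_d$ and that points of smaller order contribute only ``multiple-cover corrections of lower-degree curves.'' The second assertion is false as stated: a point $P$ of exact order $3e$ with $e\mid d$, $e<d$, supports not only multiple covers of lower-degree curves maximally tangent at $P$ but also \emph{irreducible} degree $d$ curves with contact order $3d$ at $P$ (the contact condition only forces $3dP=0$ in the group law, which holds for every such $P$), so the torsion-order stratification is not a clean divisor sum mirroring equation \eqref{eq:def_gv_form}. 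In addition, the claim that all points of a given exact order contribute equally requires a monodromy or deformation-invariance argument for the \emph{local} contributions at individual points, which is not available from the global invariant $N_{\ptwo}[3d]$ alone. Finally, your appeal to lemma \ref{lem:binomial} cannot close this: the binomial congruences in the paper establish \emph{integrality} of the transformation matrix $C$, which is insensitive to the exact values of the geometric corrections your argument needs to pin down. In short, the proposal reduces the conjecture to precisely the unproven geometric statement it was supposed to establish, and its intermediate decomposition is incorrect in a way that would make even the reduction fail without substantial repair.
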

The above conjecture provides an enumerative interpretation of the
invariants $n_{d}$. We prove a result analogous to conjecture \ref{conj-Takahashi}
in lemma \ref{lem:formula_for_C} below. Namely, that lemma provides
a linear relationship between the sets of invariants $n_{\ptwo}[3d]$
and $n_{d}$. It follows that the relative BPS state counts are calculated
from the periods of the mirror family. This is more generally true
for any Del Pezzo surface, since lemma \ref{lem:formula_for_C} holds
in that setting.

\subsection{\label{sub:Integrality-of-other-rel-bps}Integrality of relative
BPS state counts of another geometry}

The relative BPS state counts of definition \ref{def-rel-bps} are
defined in \cite{Gross-Panda-Siebert} in the more general setting of
log Calabi-Yau surface pairs (see definition \ref{def-log-cy} above),
and the integrality conjecture \ref{conjecture_GPS} is stated in that generality.
An example treated in detail throughout \cite{Gross-Panda-Siebert} is that of
the pairs consisting of blow ups of weighted projective planes and their toric divisors.\footnote{More precisely,
the singular points of the toric divisors are removed since the divisors of log Calabi-Yau surface pairs
are required to be smooth. The authors prove that the standard techniques yield
well-defined invariants.}
Consider $(a,b)\in\N^{2}$ determining
a weighted projective plane via the action 
\[
t\cdot(x,y.z)=(t^{a}x,t^{b}y,tz),
\]
where $t\in\IC^\times$. For $k\in\N$, let moreover
\[
\mathbf{P}=(\mathbf{P}_{a},\mathbf{P}_{b})
\]
consists of two ordered partitions
\[
\mathbf{P}_{a}=p_{1}+\cdots+p_{l_{a}} \text{ and } \,\mathbf{P}_{b}=p'_{1}+\cdots+p'_{l_{b}}
\]
of sizes $ak$, resp. $bk$. Then the authors consider relative Gromov-Witten
invariants, denoted by
\[
N_{a,b}[\mathbf{P}]\in\Q,
\]
which, roughly speaking, count genus $0$ maps with prescribed intersection
multiplicities along the toric divisors.%
\footnote{See \cite{Gross-Panda-Siebert} for the exact statements concerning
the geometry, the intersection multiplicities and the well-definedness
of the invariants.%
} Note that $k$ is implicit in the notation. In \cite{reineke-refined},
Reineke-Weist prove that the data of the Gromov-Witten invariants
$N_{a,b}[\mathbf{P}]$ is equivalent to the data given by the Euler
characteristic of moduli spaces of quiver representations. Using this
correspondence, the authors prove in \cite[corollary 11.4]{reineke-refined}
a variant of conjecture \ref{conjecture_GPS} above. They prove that BPS state
counts extracted from the projective space invariants $N_{1,1}[\mathbf{P}]$,
for appropriate partitions $\mathbf{P}$, are integers. The above corollary \ref{cor:integrality_of_toric_relative_BPS}
can be thought of an analogue of that result in the case of a smooth non-toric
anticanonical divisor.

\subsection{Relationship to the integrality of Donaldson-Thomas type invariants}

Gromov-Witten invariants are defined via intersection theory on the
stable map compactification.%
\footnote{Namely, $\overline{M}_{0,0}(S,\beta)$ compactifies the moduli space
of genus $0$ curves in $S$ of class $\beta$.%
} Donaldson-Thomas invariants are a closely related way of counting
curves, corresponding to the Hilbert space compactification.%
\footnote{See \cite{MNOPI,MNOPII}, where Maulik-Nekrasov-Okounkov-Pandharipande
conjecture the precise relationship.%
} A generalization
of Donaldson-Thomas invariants was proposed in \cite{Kontsevich-stab-struct}
by Kontsevich-Soibelman and in \cite{joyce-gen-dt} by Joyce-Song.
A special type of such invariants is considered by Reineke in \cite{reineke-degenerate}.
Fix $m\geq 1$. In \cite[definition 3.3]{reineke-degenerate}, the
author considers the following Donaldson-Thomas type invariants.
Let $n\geq 0$. Then $\dt_{n}^{(m)}$ encodes the Euler characteristic of some specific
non-commutative Hilbert schemes associated to the $m$-loop quiver.
More precisely, consider the free algebra on $m$ generators
$F^{(m)}:=\IC\langle x_1,\dots,x_m\rangle$. Consider moreover the
non-commutative Hilbert scheme $\hilb_n^{(m)}$ parametrizing left ideals $I$ in $F^{(m)}$
of codimension $n$, i.e.\ such that $\dim_{\IC} F^{(m)} / I = n$. Consider furthermore the generating
function of Euler characteristics
$$
F(t):=\sum_{n\geq 0} \chi\left( \hilb_n^{(m)} \right) t^n \in \Z[[t]].
$$
Then the Donaldson-Thomas type invariants $\dt_{n}^{(m)}$
are defined as rational numbers via the the equality of power series
$$
F\left((-1)^{m-1} \; t\right) = \prod_{n\geq 1} \left( 1-t^n \right)^{-(-1)^{(m-1)n} \; n \dt_n^{(m)}}.
$$
The shape of the above definition is motivated by \cite{Kontsevich-stab-struct}.
Then, \cite[theorem 3.2]{reineke-degenerate} states that $\dt_{n}^{(m)}\in\N$
and provides a formula in terms of sums of binomial
coefficients. This result is proven for a more general class of examples
by Reineke in \cite[theorem 5.9 and formula (15)]{reineke-cohomology}. It is absolutely remarkable that the invariants
$\dt_{n}^{(m)}$ are exactly the coefficients of the transformation matrix $C$
of lemma \ref{lem:formula_for_C} below:
\begin{equation}
\label{explicit}
C_{st} = \dt_{s/t}^{(tw-1)}.
\end{equation}
Here $s,t\geq 1$ are such that $s/t\in\N$, and $w$ is as in section \ref{intro-rel-BPS}.
The matrix $C$ transforms the relative BPS state counts $n_S[dw]$ from section \ref{intro-rel-BPS}
into the local BPS state counts $n_{d\beta}$ from section \ref{intro-loc-BPS}.
Albeit formula \eqref{explicit} provides a direct numerical connection, we are as of now not
aware of a geometric connection as to why the transformation matrix from relative to local
BPS state counts should be given by the invariants $\dt_{s/t}^{(tw-1)}$. Our proof
of the integrality of the coefficients $C_{st}$, via analysis of
congruence relations of binomial coefficients, has much in common with the proof of
\cite[theorem 3.2]{reineke-degenerate}. That theorem however
is proven in the more general setting of \cite{reineke-cohomology}.
Avoiding the extra formalism of \cite{reineke-cohomology}, our proof is more direct.

\subsection{Outline}

The proof of theorem \ref{thm:equivalence} is split into two parts.
In section \ref{sec:Combinatorics}, lemma \ref{lem:formula_for_C}
states the precise relationship between the local and relative BPS
state counts that we consider. Each set of invariants is related to
the other by means of an invertible matrix. In section \ref{sec:Integrality},
we analyze congruence classes relating to the entries of this matrix.
We prove that each entry is integer valued, which proves theorem \ref{thm:equivalence}.
\begin{acknowledgement*}
The first author would like to thank T. Graber for introducing him
to the subject of Gromov-Witten theory and for many enlightening discussions
on curve counting that formed the basis for the present paper. The
first author would like to extend special thanks to Y. Ruan, who has
provided valuable guidance on the aspects of this paper relating to
mirror symmetry. The authors would like to thank N. Yui, R. Abouaf
and M. Florence for many helpful comments that improved the quality
and readability of the paper. The authors would like to extend special
thanks to the referees, whose reports greatly improved the quality
of the present paper. Part of the research reported here was
performed while the authors were students at the California Institute
of Technology. This paper was completed while the first author was
in residence at the Fields Institute for the thematic program: Calabi-Yau
Varieties: Arithmetic, Geometry and Physics, July-December 2013. The
first author would like to thank the Fields Institute for its hospitality
and generous support.
\end{acknowledgement*}

\section{Combinatorics\label{sec:Combinatorics}}

Let $S$ be a Del Pezzo surface with smooth effective anticanonical
divisor $E$ and let $\beta\in\hhh_{2}(S,\Z)$ be a non-zero effective
primitive curve class. Consider two sequences of rational numbers
\[
\left\{ N_{S}[dw]\right\} _{d\geq1}\text{ and } \, \left\{ I_{K_{S}}(d\beta)\right\} _{d\geq1},
\]
which we assume to be related, for all $d\geq1$, by equation \eqref{eq:GGH-formula}.
Define two sequences of rational numbers, 
\[
\left\{ n_{S}[dw]\right\} _{d\geq1}\text{ and } \, \left\{ n_{d\beta}\right\} _{d\geq1},
\]
by means of the equations \eqref{eq:defn_gv_gen_fn} and \eqref{eq:defn_rel_bps_gen_fn}.
Note that formula \eqref{eq:defn_rel_bps_gen_fn} is equivalent to
the set of equations:
\begin{equation}
N_{S}[dw]=\sum_{k|d}\frac{1}{k^{2}}\binom{k(\frac{d}{k}w-1)-1}{k-1}n_{S}[dw/k].\label{eq:defn-rel}
\end{equation}
Combining the formulas \eqref{eq:def_gv_form}, \eqref{eq:GGH-formula}
and \eqref{eq:defn-rel} yields the following collection of formulas:
\begin{equation}
\sum_{k|d}\frac{1}{k^{2}}\binom{k(\frac{d}{k}w-1)-1}{k-1}n_{S}[dw/k]=(-1)^{dw+1}\, dw\sum_{k|d}\frac{1}{k^{3}}\, n_{d\beta/k}.\label{eq:rel_sum}
\end{equation}

Fix a positive integer $N$ and, for $1\leq d\leq N$, consider the
formulas \eqref{eq:rel_sum}. In matrix form, this collection of formulas
is expressed as 
\begin{equation}
R\,\left[n_{S}[dw]\right]_{d}=A\cdot L\cdot A^{-1}\,\left[(-1)^{dw+1}\, dw\, n_{d\beta}\right]_{d},\label{eq:matrices}
\end{equation}
where $R$, $A$ and $L$ are the following lower triangular $N\times N$
matrices:
\begin{align*}
R_{ij} & :=\begin{cases}
\frac{1}{(i/j)^{2}}\binom{i/j\,(jw-1)-1}{i/j-1} & \text{if }j|i,\\
0 & \text{else;}
\end{cases}\\
A_{ij} & :=(-1)^{iw+1}\, iw\,\delta_{ij};\\
L_{ij} & :=\begin{cases}
\frac{1}{(i/j)^{3}} & \text{if }j|i,\\
0 & \text{else.}
\end{cases}
\end{align*}
Note that $R$ is lower triangular and has determinant $1$.
\begin{notation*}
For an integer $n$, denote by $\omega(n)$ the number of primes (not
counting multiplicities) in the prime factorization of $n$. Moreover,
let
\[
I(n):=\left\{ k\in\N\,:\, k|n\text{ and }n/k\text{ is square-free}\right\} .
\]
\end{notation*}
\begin{lem}
\label{lem:formula_for_C}Define the $N\times N$ matrix $C$ as follows.
If $t|s$, set
\begin{equation}
C_{st}:=\frac{(-1)^{sw}}{(s/t)^{2}}\sum_{k\in I(s/t)}(-1)^{\omega\left(s/kt\right)}(-1)^{ktw}\binom{k(tw-1)-1}{k-1}.\label{eq:C_st}
\end{equation}
If $t\nmid s$, set $C_{st}=0$. Then the sequences
$$
\left\{ n_{S}[dw]\right\} \text{ and } \left\{ (-1)^{dw+1}\, dw\, n_{d\beta}\right\},
$$
for $1\leq d\leq N$, are related via 
\begin{equation}
C\cdot\left[n_{S}[dw]\right]_{d}=\left[(-1)^{dw+1}\, dw\, n_{d\beta}\right]_{d}.\label{eq:relation_local_rel_bps}
\end{equation}
Moreover, $C$ has determinant $1$ and is lower triangular. Thus,
by Cramer's rule,
\[
C\text{ integral }\Longleftrightarrow C^{-1}\text{ integral.}
\]
\end{lem}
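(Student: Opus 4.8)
The plan is to obtain $C$ by simply inverting the diagonal conjugation in \eqref{eq:matrices}. Since $A$ is diagonal and invertible, \eqref{eq:matrices} rearranges to
\[
\left[(-1)^{dw+1}\,dw\,n_{d\beta}\right]_{d}=A\,L^{-1}\,A^{-1}\,R\,\left[n_{S}[dw]\right]_{d},
\]
so I would \emph{define} $C:=A\,L^{-1}\,A^{-1}\,R$. With this definition the relation \eqref{eq:relation_local_rel_bps} is automatic, and the entire content of the lemma reduces to (i) identifying the entries of $A\,L^{-1}\,A^{-1}\,R$ with the formula \eqref{eq:C_st}, and (ii) reading off triangularity and the determinant.

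The computational heart is inverting $L$. Since $L_{ij}$ depends only on $i/j$ and is supported on the divisibility order, $L$ represents Dirichlet convolution by the completely multiplicative function $m\mapsto m^{-3}$, whose Dirichlet inverse is $m\mapsto\mu(m)\,m^{-3}$. Thus I expect $L^{-1}_{ij}=\mu(i/j)/(i/j)^{3}$ for $j\mid i$ and $0$ otherwise, which I would verify directly: writing $a=i/m$, $b=m/j$ in $(LL^{-1})_{ij}=\sum_{j\mid m\mid i}L_{im}L^{-1}_{mj}$ turns it into $(i/j)^{-3}\sum_{b\mid i/j}\mu(b)=[i=j]$. Conjugating by the diagonal matrix $A$ with $A_{ii}=(-1)^{iw+1}\,iw$ then gives, for $j\mid i$,
\[
(A\,L^{-1}\,A^{-1})_{ij}=A_{ii}\,L^{-1}_{ij}\,A^{-1}_{jj}=(-1)^{(i+j)w}\,\frac{\mu(i/j)}{(i/j)^{2}},
\]
where the factor $iw/(jw)$ cancels one power of $i/j$ and the two signs combine.

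Multiplying by $R$ yields $C_{st}=\sum_{t\mid m\mid s}(A\,L^{-1}\,A^{-1})_{sm}\,R_{mt}$, which already forces $t\mid s$. Substituting $m=tk$ with $k\mid s/t$, inserting $R_{mt}=\tfrac{1}{k^{2}}\binom{k(tw-1)-1}{k-1}$, and using $(s/(tk))^{2}k^{2}=(s/t)^{2}$, the prefactor collapses to $(-1)^{sw}/(s/t)^{2}$; the Möbius factor $\mu(s/(tk))$ restricts the sum to those $k$ with $s/(tk)$ square-free, i.e.\ to $k\in I(s/t)$, where $\mu(s/(tk))=(-1)^{\omega(s/kt)}$. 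This reproduces \eqref{eq:C_st} exactly. For the structural claims, $C$ is a product of lower-triangular matrices ($A,A^{-1}$ diagonal, $L^{-1}$ and $R$ lower triangular by their divisibility support), hence lower triangular; and $\det C=\det(A\,L^{-1}\,A^{-1})\,\det R=\det(L^{-1})\cdot 1=1$, since conjugation preserves the determinant and $\det L=\det R=1$ (equivalently, setting $t=s$ in \eqref{eq:C_st} gives $I(1)=\{1\}$ and $C_{ss}=(-1)^{sw}(-1)^{sw}=1$).

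Finally, the Cramer's-rule equivalence follows from $\det C=1$: one has $C^{-1}=\det(C)^{-1}\,\mathrm{adj}(C)=\mathrm{adj}(C)$, and each cofactor is an integer polynomial in the entries of $C$, so $C$ integral forces $C^{-1}$ integral; the symmetric argument applied to $C^{-1}$ (which also has determinant $1$) gives the converse. The step I expect to be the main obstacle is not any single deduction but the sign-- and index--bookkeeping: keeping the factors $(-1)^{iw}$ consistent through both the Dirichlet inversion and the conjugation by $A$, and correctly matching the support condition coming from $\mu$ with the index set $I(s/t)$. The arithmetic is routine but delicate, and this is where an error would be most likely to enter.
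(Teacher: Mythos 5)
Your proposal is correct and follows essentially the same route as the paper: both define $C=A\,L^{-1}\,A^{-1}\,R$ from \eqref{eq:matrices}, invert $L$ via M\"obius inversion (the paper factors $L=B\tilde{L}B^{-1}$ and inverts the divisibility matrix $\tilde{L}$, while you phrase the same computation as Dirichlet-inverting $m\mapsto m^{-3}$), and then match entries by the substitution $k=r/t$ to recover \eqref{eq:C_st}, with triangularity and $\det C=1$ read off from the factorization. Your sign bookkeeping and the restriction of the sum to $I(s/t)$ via vanishing of $\mu$ agree with the paper's argument.
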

\begin{proof}
We start by writing $L=B\cdot\tilde{L}\cdot B^{-1}$, where
\begin{align*}
\tilde{L}_{ij} & =\begin{cases}
1 & \text{if }j|i,\\
0 & \text{else};
\end{cases}\\
B_{ij} & =\frac{1}{i^{3}}\,\delta_{ij}.
\end{align*}
By M\"obius inversion the inverse of $\tilde{L}$ is given by
\[
\left(\tilde{L}^{-1}\right)_{ij}=\begin{cases}
(-1)^{\omega(i/j)} & \text{if }j|i\text{ and }i/j\text{ is square-free},\\
0 & \text{else.}
\end{cases}
\]
Then,
\[
\left(AB\right)_{ij}=(-1)^{iw+1}\,\frac{w}{i^{2}}\,\delta_{ij},
\]
and
\[
\left((AB)^{-1}\right)_{ij}=(-1)^{iw+1}\,\frac{i^{2}}{w}\,\delta_{ij}.
\]
It follows from formula \eqref{eq:matrices} that a matrix $C$ satisfying \eqref{eq:relation_local_rel_bps} is given by
\[
C=A L^{-1} A^{-1}\cdot R=AB\cdot\tilde{L}^{-1}\cdot(AB)^{-1}\cdot R.
\]
The matrix $C$ is lower triangular and has determinant 1, as this is the case for
both $A L^{-1} A^{-1}$ and $R$. A calculation then yields
\[
\left(AB\cdot\tilde{L}^{-1}\right)_{sr}=\begin{cases}
(-1)^{sw+1}\,\frac{w}{s^{2}}\,(-1)^{\omega(s/r)} & \text{if }r|s\text{ and }s/r\text{ is square-free},\\
0 & \text{else;}
\end{cases}
\]
and
\[
\left((AB)^{-1}\cdot R\right)_{rt}=\begin{cases}
(-1)^{rw+1}\,\frac{r^{2}}{w}\,\frac{1}{(r/t)^{2}}\binom{r/t\,(tw-1)-1}{r/t-1} & \text{if }t|r,\\
0 & \text{else.}
\end{cases}
\]
If $t$ does not divide $s$, then there is no integer $r$ such that
$t|r|s$, so that $C_{st}=0$. If, however, $t|s$, then
\begin{align*}
C_{st} & =\frac{(-1)^{sw+1}}{(s/t)^{2}}\,\sum(-1)^{\omega(s/r)}\,(-1)^{rw+1}\,\binom{r/t\,(tw-1)-1}{r/t-1},
\end{align*}
where the sum runs over all $r$ such that $t|r|s$ and such that
$s/r$ is square-free. Set $k=r/t$, so that, for $t$ dividing $s$,
\begin{align*}
C_{st} & =\frac{(-1)^{sw}}{(s/t)^{2}}\,\sum_{k\in I(s/t)}(-1)^{\omega(s/kt)}\,(-1)^{ktw}\,\binom{k\,(tw-1)-1}{k-1},
\end{align*}
finishing the proof.
\end{proof}
Lemma \ref{lem:formula_for_C} reduces theorem \ref{thm:equivalence}
to proving that the coefficients of the matrix $C$ are integers.
This is achieved in the lemmas \ref{lem:integrality_most_cases} and
\ref{lem:integrality_special_case} of the next section.

\section{Integrality\label{sec:Integrality}}

We start by stating the following lemma, which follows directly from
the proof of lemma $A.1$ of \cite{GV_inv_integral_local_toric_CY}.
\begin{lem}
\label{lem:binomial}(Peng) Let $a,b$ and $\alpha$ be positive integers
and denote by $p$ a prime number. If $p=2$, assume furthermore that
$\alpha\geq2$. Then
\[
\binom{p^{\alpha}a-1}{p^{\alpha}b-1}\equiv\binom{p^{\alpha-1}a-1}{p^{\alpha-1}b-1}\mod\left(p^{2\alpha}\right).
\]
\end{lem}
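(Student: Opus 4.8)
The plan is to prove the stronger statement that the two binomial coefficients differ by a multiplicative factor that is $\equiv 1 \pmod{p^{2\alpha}}$ in the ring $\Z_p$ of $p$-adic integers. First I would write
\[
\binom{p^{\alpha}a-1}{p^{\alpha}b-1}=\prod_{j=1}^{p^{\alpha}b-1}\frac{p^{\alpha}a-j}{j},
\]
and separate the factors according to whether $p\mid j$. For the indices with $p\mid j$, substituting $j=pj'$ turns the corresponding subproduct into $\prod_{j'=1}^{p^{\alpha-1}b-1}\frac{p^{\alpha-1}a-j'}{j'}=\binom{p^{\alpha-1}a-1}{p^{\alpha-1}b-1}$, which is exactly the right-hand side of the claim. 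This yields the factorization
\[
\binom{p^{\alpha}a-1}{p^{\alpha}b-1}=\binom{p^{\alpha-1}a-1}{p^{\alpha-1}b-1}\cdot U,\qquad U:=\prod_{\substack{1\le j\le p^{\alpha}b-1\\ p\nmid j}}\frac{p^{\alpha}a-j}{j}.
\]
Since the right-hand binomial coefficient is an ordinary integer, the lemma reduces to showing $U\equiv 1\pmod{p^{2\alpha}}$ in $\Z_p$.

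Next I would rewrite each remaining factor as $\frac{p^{\alpha}a-j}{j}=-(1-p^{\alpha}a\,j^{-1})$, where $j^{-1}$ denotes the $p$-adic inverse of the unit $j$. Collecting the signs gives $U=(-1)^{M}\prod_{p\nmid j}(1-p^{\alpha}a\,j^{-1})$, where $M=p^{\alpha-1}(p-1)b$ is the number of units in the range. Here I would check that $M$ is even in every case allowed by the hypotheses: this is automatic for odd $p$, and for $p=2$ it is exactly where the assumption $\alpha\ge 2$ is needed, so that $(-1)^{M}=1$. Expanding the product and discarding the terms divisible by $(p^{\alpha})^{2}=p^{2\alpha}$, I obtain
\[
U\equiv 1-p^{\alpha}a\sum_{\substack{1\le j\le p^{\alpha}b-1\\ p\nmid j}}j^{-1}\pmod{p^{2\alpha}}.
\]
Thus everything comes down to the Wolstenholme-type congruence $\sum_{p\nmid j}j^{-1}\equiv 0\pmod{p^{\alpha}}$.

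To establish this last congruence I would group the indices into the $b$ blocks $\{ip^{\alpha}+1,\dots,(i+1)p^{\alpha}\}$ for $0\le i\le b-1$; modulo $p^{\alpha}$ each block contributes one full reduced residue system, so the sum is congruent to $b\sum_{r}r^{-1}$, where $r$ runs over $(\Z/p^{\alpha})^{\times}$. As $r\mapsto r^{-1}$ permutes the units, this equals $b\sum_{r}r$, and pairing $r$ with $p^{\alpha}-r$ shows $\sum_{r}r=\tfrac{1}{2}\phi(p^{\alpha})\,p^{\alpha}\equiv 0\pmod{p^{\alpha}}$; the pairing is valid precisely because $\phi(p^{\alpha})$ is even, which again forces the restriction $\alpha\ge 2$ when $p=2$. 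I expect the main obstacle to be this reciprocal-sum step together with the bookkeeping of the parity conditions: the congruence fails for $p=2$, $\alpha=1$ (where $\phi(2)=1$ is odd), so the argument must be arranged so that the two places where evenness is used, namely the sign $(-1)^{M}$ and the pairing of units, are both covered by the single hypothesis. Combining the three displays gives $U\equiv 1\pmod{p^{2\alpha}}$, and multiplying back by the integer $\binom{p^{\alpha-1}a-1}{p^{\alpha-1}b-1}$ completes the proof.
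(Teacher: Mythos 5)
Your proof is correct, but note that the paper itself does not prove this lemma: it is stated as following directly from the proof of Lemma A.1 of Peng \cite{GV_inv_integral_local_toric_CY}, so there is no internal argument to compare against, and your write-up serves as a self-contained substitute. Your route --- splitting the product $\binom{p^{\alpha}a-1}{p^{\alpha}b-1}=\prod_{j}\frac{p^{\alpha}a-j}{j}$ according to whether $p\mid j$, so that the $p\mid j$ factors reassemble into $\binom{p^{\alpha-1}a-1}{p^{\alpha-1}b-1}$ and the remaining unit factors form a $p$-adic unit $U$, then expanding $U$ modulo $p^{2\alpha}$ --- is the standard elementary attack and is essentially the style of argument in Peng's appendix. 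The reduction to the Wolstenholme-type congruence $\sum_{p\nmid j}j^{-1}\equiv 0\pmod{p^{\alpha}}$ is valid, your block decomposition correctly handles the range (the excluded endpoint $p^{\alpha}b$ is divisible by $p$, so nothing is lost), and your accounting of where $p=2,\ \alpha\geq 2$ enters is accurate: for $p=2$, $\alpha=1$ the statement genuinely fails without an extra sign, which is exactly the content of the paper's separate Lemma \ref{lem:binomial_special_case}. One small imprecision: the pairing $r\leftrightarrow p^{\alpha}-r$ on $(\Z/p^{\alpha})^{\times}$ is legitimate not \emph{precisely because} $\phi(p^{\alpha})$ is even, but because the involution has no fixed points --- a fixed point requires $2r\equiv 0\pmod{p^{\alpha}}$ with $r$ a unit, which happens only when $p^{\alpha}=2$; evenness of $\phi(p^{\alpha})$ by itself would not rule out an even number of fixed points. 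This does not affect correctness, since in all cases allowed by the hypotheses the involution is indeed fixed-point free and yields $\sum_{r}r=\tfrac{1}{2}\phi(p^{\alpha})p^{\alpha}\equiv 0\pmod{p^{\alpha}}$.
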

In the case that $p=2$ and $\alpha=1$, we have the following lemma:
\begin{lem}
\label{lem:binomial_special_case}Let $k\geq1$ be odd and let $a$
be a positive integer. Then
\[
\binom{2ka-1}{2k-1}\equiv(-1)^{a+1}\binom{ka-1}{k-1}\mod\left(4\right).
\]
\end{lem}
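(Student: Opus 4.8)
The plan is to factor $\binom{2ka-1}{2k-1}$ into an ``even part,'' which will reproduce $\binom{ka-1}{k-1}$, and an ``odd part,'' which will be responsible for the sign $(-1)^{a+1}$. Concretely, I would write $\binom{2ka-1}{2k-1}=\frac{\prod_{i=1}^{2k-1}(2ka-i)}{(2k-1)!}$ and split both numerator and denominator according to the parity of $i$. The $k-1$ even values $i=2j$ contribute $\prod_{j=1}^{k-1}2(ka-j)$ to the numerator and $2^{k-1}(k-1)!$ to the denominator, while the $k$ odd values $i=2j-1$ contribute $\prod_{j=1}^{k}(2(ka-j)+1)$ to the numerator and the double factorial $(2k-1)!!=\prod_{j=1}^{k}(2j-1)$ to the denominator. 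After the powers of $2$ cancel, the even part collapses to $\binom{ka-1}{k-1}$, yielding the exact rational identity $\binom{2ka-1}{2k-1}=\binom{ka-1}{k-1}\cdot\frac{P_1}{P_2}$, where $P_1:=\prod_{j=1}^{k}(2(k(a-1)+j)-1)$ is the product of the $k$ consecutive odd integers from $2k(a-1)+1$ up to $2ka-1$, and $P_2:=(2k-1)!!$.

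Clearing denominators gives the exact identity of integers $\binom{2ka-1}{2k-1}\,P_2=\binom{ka-1}{k-1}\,P_1$. Since $P_2$ is odd, hence invertible modulo $4$, the desired congruence is equivalent, after multiplying through by $P_2$ and cancelling it at the end, to showing $P_1\equiv(-1)^{a+1}P_2\pmod 4$. Thus the problem reduces to a congruence between two products of odd integers modulo $4$, with the binomial coefficient $\binom{ka-1}{k-1}$ playing no further role.

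To evaluate these products modulo $4$ I would use the elementary observation that an odd number is $\equiv 1$ or $\equiv 3\equiv-1\pmod 4$ according to the parity of its index, precisely $2m-1\equiv(-1)^{m+1}\pmod 4$. Since a product of numbers each $\equiv\pm1\pmod 4$ is congruent modulo $4$ to the product of the corresponding signs, I get $P_2\equiv(-1)^{\#\{1\le j\le k:\ j\text{ even}\}}$ and $P_1\equiv(-1)^{\#\{1\le j\le k:\ k(a-1)+j\text{ even}\}}\pmod 4$. Here is where the hypothesis that $k$ is odd enters: it forces $k(a-1)+j\equiv(a-1)+j\pmod 2$, so the parity of the index in $P_1$ is that of $j$ shifted by $a$. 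Counting the even indices in $\{1,\dots,k\}$ (with $k$ odd there are $(k-1)/2$ even and $(k+1)/2$ odd ones) and splitting into the cases $a$ odd and $a$ even then gives the two exponents explicitly and shows $P_1\equiv(-1)^{a+1}P_2\pmod 4$ in both cases.

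I expect the only real obstacle to be the sign bookkeeping in this last step: keeping track of how the shift by $k(a-1)$ reindexes even versus odd factors, and verifying that the cases $a$ even and $a$ odd assemble into the single uniform sign $(-1)^{a+1}$. Everything else is formal, and this lemma is exactly the $p=2$, $\alpha=1$ complement to Lemma~\ref{lem:binomial}.
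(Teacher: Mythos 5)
Your proposal is correct and follows essentially the same route as the paper: factor $\binom{2ka-1}{2k-1}$ into its even part, which collapses to $\binom{ka-1}{k-1}$, and its odd part, a ratio of products of $k$ consecutive odd integers, then show that odd part is $\equiv(-1)^{a+1}\pmod 4$ by a case split on the parity of $a$. The only cosmetic difference is that you clear denominators and evaluate each product via the index-parity formula $2m-1\equiv(-1)^{m+1}\pmod 4$, whereas the paper keeps the fraction and matches factors directly using $4\mid 2ka$ (for $a$ even) or $4\mid 2k(a-1)$ (for $a$ odd); both hinge on the same facts.
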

\begin{proof}
Note that
\begin{align*}
\binom{2ka-1}{2k-1} & =\frac{2ka-1}{2k-1}\cdot\frac{2ka-2}{2k-2}\cdots\frac{2ka-2k+2}{2}\cdot\frac{2ka-2k+1}{1}\\
 & =\frac{2ka-1}{2k-1}\cdot\frac{ka-1}{k-1}\cdots\frac{ka-k+1}{1}\cdot\frac{2ka-2k+1}{1}\\
 & =\frac{(ka-1)(ka-2)\cdots(ka-k+1)}{(k-1)(k-2)\cdots1}\\
 & \hspace{2.5mm} \cdot\frac{(2ka-1)(2ka-3)\cdots(2ka-2k+1)}{(2k-1)(2k-3)\cdots1}\\
 & =\binom{ka-1}{k-1}\cdot\frac{(2ka-1)(2ka-3)\cdots(2ka-2k+1)}{(2k-1)(2k-3)\cdots1},
\end{align*}
and hence
\begin{align*}
 & \binom{2ka-1}{2k-1}+(-1)^{a}\binom{ka-1}{k-1}\\
= & \binom{ka-1}{k-1}\left((-1)^{a}+\frac{(2ka-1)(2ka-3)\cdots(2ka-2k+1)}{(2k-1)(2k-3)\cdots1}\right).
\end{align*}
It thus suffices to show that
\begin{equation}
\frac{(2ka-1)(2ka-3)\cdots(2ka-2k+1)}{(2k-1)(2k-3)\cdots1}\equiv(-1)^{a+1}\mod\left(4\right).\label{eq:long_expression}
\end{equation}
Suppose first that $a$ is even, so that the left-hand-side of \eqref{eq:long_expression}
is congruent to
\begin{align*}
\equiv & \, \frac{(-1)(-3)\cdots(-(2k-3))(-(2k-1))}{1\cdot3\cdots(2k-3)(2k-1)}\\
\equiv & \, (-1)^{k}\equiv \, (-1)^{a+1}\mod(4),
\end{align*}
where the last congruence follows form the fact that $k$ is odd.
Suppose now that $a$ is odd. Then the left-hand-side of the expression
\eqref{eq:long_expression} is congruent to
\begin{align*}
\equiv & \, \frac{2k(a-1)+(2k-1)}{2k-1}\cdot\frac{2k(a-1)+(2k-3)}{2k-3}\cdots\frac{2k(a-1)+1}{1}\\
\equiv & \, 1\equiv \, (-1)^{a+1}\mod\left(4\right).
\end{align*}
\end{proof}
We return to the proof of theorem \ref{thm:equivalence}. If $s=t$,
then $C_{st}=1$ is an integer, and we may thus henceforth assume that $t|s$,
but $t\neq s$. Let $p$ be a prime number and $\alpha$ a positive
integer. For an integer $n$, we use the notation
\[
p^{\alpha}||n,
\]
to mean that $p^{\alpha}|n$, but $p^{\alpha+1}\nmid n$. In order
to prove that $C_{st}\in\Z$, we show the following: If $p$ is a
prime number such that
\[
p^{\alpha}||\frac{s}{t},
\]
then
\begin{equation}
p^{2\alpha}|\sum_{k\in I(s/t)}(-1)^{\omega(s/kt)}\,(-1)^{ktw}\,\binom{k\,(tw-1)-1}{k-1}.\label{eq:divisibility}
\end{equation}
Fix a prime number $p$ and a positive integer $\alpha$ such that
\[
p^{\alpha}||\frac{s}{t}.
\]
We regroup the sum over $k\in I(s/t)$ as follows. Let $k\in I(s/t)$.
For $s/kt$ to be square-free, it is necessary that $p^{\alpha-1}|k$.
This splits into the two cases
\[
p^{\alpha-1}||k,
\]
in which case $k/p^{\alpha-1}\in I(s/p^{\alpha}t)$; and
\[
p^{\alpha}||k,
\]
so that $k/p^{\alpha-1}=p\, l$ for $l\in I(s/p^{\alpha}t)$. Regrouping
the terms of the sum \eqref{eq:divisibility} accordingly yields
\[
\sum_{l\in I(s/p^{\alpha}t)}\sum_{k\in\left\{ p^{\alpha-1}l,p^{\alpha}l\right\} }(-1)^{\omega(s/kt)}\,(-1)^{ktw}\,\binom{k\,(tw-1)-1}{k-1}.
\]
Thus, it suffices to show that for all $l\in I(s/p^{\alpha}t)$,
\[
f(l):=\sum_{k\in\left\{ p^{\alpha-1}l,p^{\alpha}l\right\} }(-1)^{\omega(s/kt)}\,(-1)^{ktw}\,\binom{k\,(tw-1)-1}{k-1}\equiv0\mod\left(p^{2\alpha}\right),
\]
which we proceed to prove. There are two cases. In the above sum, either the sign $(-1)^{\omega(s/kt)}\,(-1)^{ktw}$
changes or it does not. The only case where the sign does
not change is when $p=2$, $\alpha=1$, and both $t$ and $w$ are
odd.
\begin{lem}
\label{lem:integrality_most_cases}Assume that either $p\neq2$ or,
if $p=2$, that $\alpha>1$. Then
\[
f(l)\equiv0\mod\left(p^{2\alpha}\right).
\]
\end{lem}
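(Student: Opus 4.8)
The plan is to pair the two surviving terms $k_1:=p^{\alpha-1}l$ and $k_2:=p^{\alpha}l$ of $f(l)$ and to show that, under the stated hypotheses, their combined signs are \emph{opposite}. This turns $f(l)$ (up to one global sign) into a \emph{difference} of two binomial coefficients, which is then handled in one stroke by Peng's Lemma \ref{lem:binomial}.

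First I would pin down the sign. Write $s/t=p^{\alpha}m$ with $\gcd(m,p)=1$, so that $I(s/p^{\alpha}t)=I(m)$ and every $l\in I(m)$ satisfies $l\mid m$ with $m/l$ square-free and coprime to $p$. Then $s/(k_2t)=m/l$ while $s/(k_1t)=p\,(m/l)$, so that $\omega(s/k_1t)=\omega(s/k_2t)+1$; hence the factor $(-1)^{\omega(s/kt)}$ \emph{always} flips between the two terms. For the factor $(-1)^{ktw}$ the exponents differ by $k_2tw-k_1tw=p^{\alpha-1}(p-1)\,ltw$. Under the hypothesis of the lemma (either $p$ odd, or $p=2$ with $\alpha>1$) this difference is even, so $(-1)^{ktw}$ does \emph{not} change; it is precisely in the excluded case $p=2$, $\alpha=1$, $tw$ odd that $p^{\alpha-1}(p-1)ltw=ltw$ becomes odd (recall $l\mid m$ is odd there) and the $tw$-parity flips as well. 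Combining the two observations, the sign $(-1)^{\omega(s/kt)}(-1)^{ktw}$ reverses when passing from $k_1$ to $k_2$, so factoring out the value at $k_2$ gives
\[
f(l)=(-1)^{\omega(s/k_2t)}(-1)^{k_2tw}\left[\binom{k_2(tw-1)-1}{k_2-1}-\binom{k_1(tw-1)-1}{k_1-1}\right].
\]

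It then remains to prove that these two binomial coefficients agree modulo $p^{2\alpha}$. Here I would set $a:=l(tw-1)$ and $b:=l$, both positive integers, and rewrite the binomials as
\[
\binom{k_2(tw-1)-1}{k_2-1}=\binom{p^{\alpha}a-1}{p^{\alpha}b-1},\qquad\binom{k_1(tw-1)-1}{k_1-1}=\binom{p^{\alpha-1}a-1}{p^{\alpha-1}b-1}.
\]
Lemma \ref{lem:binomial} then yields exactly $\binom{p^{\alpha}a-1}{p^{\alpha}b-1}\equiv\binom{p^{\alpha-1}a-1}{p^{\alpha-1}b-1}\mod\left(p^{2\alpha}\right)$, and crucially the extra hypothesis of that lemma---that $\alpha\geq2$ when $p=2$---is precisely the hypothesis we are assuming. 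Hence the bracket vanishes modulo $p^{2\alpha}$, and therefore $f(l)\equiv0\mod\left(p^{2\alpha}\right)$.

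The computation is short once the bookkeeping is in place, so I expect the only point genuinely requiring care to be the sign analysis of the second paragraph: verifying that the $\omega$-parity \emph{always} flips while the $tw$-parity flips only in the case $p=2$, $\alpha=1$, $tw$ odd. Getting this casework for $(-1)^{ktw}$ correctly aligned with the hypotheses is exactly what cleanly separates this lemma from the companion Lemma \ref{lem:integrality_special_case}, where the signs do not cancel and one must instead invoke Lemma \ref{lem:binomial_special_case}.
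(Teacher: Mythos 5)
Your proof is correct and follows essentially the same route as the paper: pair the terms $k_1=p^{\alpha-1}l$ and $k_2=p^{\alpha}l$, observe that their signs are opposite under the stated hypotheses so that $f(l)$ becomes $\pm$ a difference of binomial coefficients, and conclude by Peng's Lemma \ref{lem:binomial} with $a=l(tw-1)$, $b=l$. The only difference is presentational: the paper asserts the sign dichotomy (``the only case where the sign does not change is $p=2$, $\alpha=1$, $t,w$ odd'') in the text preceding the lemma without detailed justification, whereas you verify it explicitly via $\omega(s/k_1t)=\omega(s/k_2t)+1$ and the parity of $p^{\alpha-1}(p-1)ltw$ --- a worthwhile elaboration, not a departure.
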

\begin{proof}
In this case,
\begin{align*}
f(l) & =\pm\left(\binom{p^{\alpha}l(tw-1)-1}{p^{\alpha}l-1}-\binom{p^{\alpha-1}l(tw-1)-1}{p^{\alpha-1}l-1}\right)\\
 & \equiv0\mod\left(p^{2\alpha}\right),
\end{align*}
by lemma \ref{lem:binomial}.\end{proof}
\begin{lem}
\label{lem:integrality_special_case}Assume that $p=2$ and that $\alpha=1$.
Then
\[
f(l)\equiv0\mod\left(4\right).
\]
\end{lem}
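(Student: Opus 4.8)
The plan is to collapse the two-term sum $f(l)$ into a single difference of two binomial coefficients of exactly the shape controlled by Lemma~\ref{lem:binomial_special_case}, and then invoke that lemma. All the genuine arithmetic has already been packaged into Lemma~\ref{lem:binomial_special_case}; what remains here is to set up the signs so that the residue is precisely the difference that lemma kills.

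First I would pin down the parity of $l$. Since $p=2$ and $\alpha=1$, we have $2\,\|\,s/t$, so we may write $s/t=2m$ with $m$ odd. As $l\in I(s/2t)=I(m)$, in particular $l\mid m$, and therefore $l$ is odd. This is the crucial fact, since the hypothesis of Lemma~\ref{lem:binomial_special_case} requires an odd parameter $k$, which I will take to be $k=l$.

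Next I would analyze the two sign factors $(-1)^{\omega(s/kt)}(-1)^{ktw}$ for $k=l$ and $k=2l$. For the $\omega$-part, since $s/lt=2(m/l)$ with $m/l$ odd while $s/2lt=m/l$, the prime $2$ contributes to the former but not to the latter, so $\omega(s/lt)=\omega(s/2lt)+1$ and hence $(-1)^{\omega(s/lt)}=-(-1)^{\omega(s/2lt)}$. For the tangency part, $(-1)^{2ltw}=1$ while $(-1)^{ltw}=(-1)^{tw}$ because $l$ is odd. Feeding these into the definition of $f(l)$ and factoring out $(-1)^{\omega(s/2lt)}$ yields
\[
f(l)=(-1)^{\omega(s/2lt)}\left[\binom{2l(tw-1)-1}{2l-1}-(-1)^{tw}\binom{l(tw-1)-1}{l-1}\right].
\]

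Finally I would apply Lemma~\ref{lem:binomial_special_case} with $k=l$ (odd) and $a=tw-1$, which gives $\binom{2l(tw-1)-1}{2l-1}\equiv(-1)^{tw}\binom{l(tw-1)-1}{l-1}\pmod{4}$; the bracket is then $\equiv 0\pmod 4$, so $f(l)\equiv 0\pmod 4$, as desired. The only thing left to check is the degenerate case $tw=1$, where $a=0$ falls outside the hypotheses of Lemma~\ref{lem:binomial_special_case} and the binomials must be read via the convention $\binom{-1}{j}=(-1)^{j}$; there a direct substitution shows the bracket vanishes identically (indeed $f(l)=0$), so the conclusion still holds. I expect the main obstacle to be purely bookkeeping: correctly tracking how the single factor of $2$ shifts the $\omega$-sign by one, and how the oddness of $l$ collapses $(-1)^{ltw}$ to $(-1)^{tw}$, so that the surviving expression is exactly the difference that Lemma~\ref{lem:binomial_special_case} was designed to annihilate.
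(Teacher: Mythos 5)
Your proof is correct and takes essentially the same route as the paper: you collapse $f(l)$ into $\pm\left(\binom{2l(tw-1)-1}{2l-1}+(-1)^{tw-1}\binom{l(tw-1)-1}{l-1}\right)$ and apply Lemma~\ref{lem:binomial_special_case} with $k=l$ and $a=tw-1$, with your sign bookkeeping and the observation that $l$ is odd simply making explicit what the paper leaves implicit. Your aside on the degenerate case $tw=1$ (where $a=0$ falls outside the lemma's hypotheses and one uses $\binom{-1}{j}=(-1)^{j}$) is a correct extra check that the paper does not address.
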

\begin{proof}
In this case,
\begin{align*}
f(l) & =\pm\left(\binom{2l(tw-1)-1}{2l-1}+(-1)^{tw-1}\binom{l(tw-1)-1}{l-1}\right)\\
 & \equiv0\mod\left(4\right),
\end{align*}
follows from lemma \ref{lem:binomial_special_case}.
\end{proof}
Therefore in both cases $f(l)$ is divisible by $p^{2\alpha}$, and this
finishes the proof that the entries of $C$ are integers. Consequently,
by lemma \ref{lem:formula_for_C}, the proof of theorem \ref{thm:equivalence}
is complete.
\bibliographystyle{amsalpha}
\providecommand{\bysame}{\leavevmode\hbox to3em{\hrulefill}\thinspace}
\providecommand{\MR}{\relax\ifhmode\unskip\space\fi MR }
\providecommand{\MRhref}[2]{%
  \href{http://www.ams.org/mathscinet-getitem?mr=#1}{#2}
}
\providecommand{\href}[2]{#2}

\end{document}